 \newtheorem{theorem}{Theorem}
\newtheorem{lemma}[theorem]{Lemma}
\newtheorem{corollary}[theorem]{Corollary}
\newtheorem{proposition}[theorem]{Proposition}
\def\tr{\Delta}
\def\bea{\begin{eqnarray*}}
\def\eea{\end{eqnarray*}}
\def\be{\begin{eqnarray}}
\def\ee{\end{eqnarray}}
\numberwithin{equation}{section}
\begin{document}

\title[Gap theorems on critical point equation]{Gap theorems on critical point equation of the total scalar curvature with divergence-free Bach tensor}
\author{Gabjin Yun}
\address{Department of Mathematics\\  Myong Ji University\\
116 Myongji-ro Cheoin-gu\\ Yongin, Gyeonggi 17058, Republic of Korea. }
\email{gabjin@mju.ac.kr}

\author{Seungsu Hwang}
\address{Department of Mathematics\\ Chung-Ang University\\
84 HeukSeok-ro DongJak-gu \\ Seoul 06974, Republic of Korea.
}
\email{seungsu@cau.ac.kr} 

\keywords{critical point equation, total scalar curvature, Besse conjecture, Bach tensor, Einstein metric}

\subjclass{Primary 53C25; Secondary 58E11}

 \begin{abstract}
 On a compact $n$-dimensional manifold, it is well known that a critical metric of the total scalar curvature, restricted to the space of metrics with unit volume is Einstein. 
It  has been conjectured that a critical metric of the total scalar curvature, restricted to the
space of metrics with constant scalar curvature of unit volume, will be Einstein. This conjecture, proposed in 1987 by Besse, has not been resolved except when $M$ has harmonic curvature or the metric is Bach flat.
In this paper, we prove some gap properties under divergence-free Bach tensor condition for $n\geq 5$, and a similar condition for $n=4$. \end{abstract}

\maketitle
\section{Introduction}

Let $M$ be an $n$-dimensional compact manifold, and let ${\mathcal M}_1$ be the set of all smooth 
Riemannian structures of unit volume on $M$. The total scalar curvature ${\mathcal S}$ on ${\mathcal M}_1$ is
 given by $$ {\mathcal S}(g)=\int_M s_g \, dv_g,$$ 
where $s_g$ is the scalar curvature of $g\in {\mathcal M}_1$. Hilbert showed that critical points of
 ${\mathcal S}$ on ${\mathcal M}_1$ are Einstein. In \cite{Ko},  Koiso 
introduced the space ${\mathcal C}$ of constant scalar curvature metrics of unit volume.
 The Euler-Lagrange equation of ${\mathcal S}$ restricted to ${\mathcal C}$ may be written
 in the form of the following critical point equation 
\be
z_g=s_g'^*(f).\label{cpe0}
\ee
Here, $z_g$ is the traceless Ricci tensor corresponding to $g$, and the operator $s_g'^*$ is the $L^2$
 adjoint of the linearization $s_g'$ of the scalar curvature, given by 
$$ s_g'^*(f)=D_gdf  - (\tr_g f)g-fr_g,$$
where $D_gd$ and $\tr_g$ denote the Hessian and the (negative) Laplacian, respectively, and $r_g$ is
 the Ricci curvature of $g$. 
 If $f=0$ in (\ref{cpe0}), then $g$ is clearly Einstein. 
By taking the trace of (\ref{cpe0}), we obtain 
$$ \tr_g f =-\frac {s_g}{n-1}f.$$
Thus, if $ s_g/{(n-1)}$ is not in the spectrum of $\tr_g$, then the critical metric $g$ is again Einstein. 
For example, if $s_g\leq 0$, then $g$ is Einstein. 
Note that if a non-trivial solution $(g,f)$ of (\ref{cpe0}) is Einstein,
then (\ref{cpe0}) is reduced to  the Obata equation, and so $(M, g)$  should be isometric to a standard $n$-sphere (\cite{Ob}). 

We remark that the existence of a non-trivial solution is a strong condition. 
The only known case satisfying this is that of the standard sphere.
It was conjectured in \cite{Be} that this is the only possible case.
 
\vskip .5pc
{\sc Besse Conjecture.} {\it Let $(g,f)$ be a solution of {\rm (\ref{cpe0})} on an $n$-dimensional 
compact manifold $M$. Then, $(M,g)$ is Einstein.}
\vskip .5pc

There are some partial answers to this conjecture.
For example, it was proved that the Besse conjecture holds if $M$ has harmonic curvature
 (see Theorem 1.2 of \cite{ych2} and also \cite{erra}). A Riemannain manifold $(M,g)$ is said to
 have harmonic curvature if $\delta R=0$, where $R$ is the full Riemann tensor,
 and  $\delta$ is the negative divergence operator.
 In particular, a locally conformally flat non-trivial solution $(g,f)$ of (\ref{cpe0}) with $s_g>0$ is
 clearly isometric to a standard sphere. 
 Note that when $s_g$ is constant, $\delta R=0$ if and only if $\delta {\mathcal W}=0$ (cf. (\ref{eqn02}) below), 
where ${\mathcal W}$ is the Weyl tensor. Qing and Yuan showed in \cite{qy} that the Besse conjecture  holds
 if $g$ is Bach-flat, i.e., $B=0$, where $B$ is the $n$-dimensional Bach tensor (see Section 2 for its definition). In fact, they proved that Bach-flatness implies harmonic curvature. Thus, it is natural to consider the divergence-free Bach tensor condition in the critical point equation (\ref{cpe0}) as a way to generalize Bach-flat condition.
 It turns out that $\delta B$ vanishes automatically when $n=4$, and $\delta B=0$ if and only if
 $\langle i_XC, z_g\rangle =0$ for any vector $X$ when $n\geq 5$ (see Proposition~\ref{lem02} below).
 Here, $C$ is the Cotton tensor defined by (\ref{eqn03}) below.  

In this paper, we will prove some gap properties under the assumption $\delta B=0$. 
For a non-trivial solution $(g, f)$ of (\ref{cpe0}), we define $\mu$ by
$$ 
\mu=\max \left\{ |\min_M (1+f)|, \max_M (1+f)\right\}.
$$
If $f$ satisfies $f \ge -1$, we can easily show that $(M, g)$ is Einstein. In fact, one can show from (\ref{cpe0}) that ${\rm div}(z_g(\nabla f, )) = (1+f)|z_g|^2$, and so rigidity follows from the divergence theorem. Note that $\mu \geq  1$, because we have 
$$
0=\int_M \tr f =-\frac s{n-1}\int_M f, 
$$ 
which implies that  there exists a point $p\in M$ satisfying $f(p)=0$. In fact, $\mu >1$ unless $f$ is trivial.

Our first main result for gap property on the critical point equation is the following.

\begin{theorem}  \label{main1} 
Let $(g,f)$ be a non-trivial solution of {\rm (\ref{cpe0})} on an $n$-dimensional 
compact manifold $M$. Assume that $\langle i_XC, z_g\rangle=0$ for any vector $X$ and $n\geq 4$.
 If $|z_g|^2\leq \frac {s_g^2}{4 n(n-1)\mu^2}$, then $(M,g)$ is isometric to a standard $n$-sphere.
\end{theorem}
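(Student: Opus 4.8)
The plan is to prove that the smallness hypothesis forces $z_g\equiv 0$, after which (\ref{cpe0}) collapses to the Obata equation and rigidity follows. Write $z=z_g$, $s=s_g$. First I would reduce to $s>0$: since $\mathrm{tr}_g f=-\frac{s}{n-1}f$ with $f$ non-constant and of zero mean, $f$ is an eigenfunction with positive eigenvalue, so $s>0$ (equivalently, by the remark preceding the theorem, $s\le 0$ already forces $g$ Einstein). Substituting $r_g=z+\frac{s}{n}g$ into (\ref{cpe0}) gives the clean reformulation
\[
(1+f)\,z = D_gdf + \frac{s f}{n(n-1)}\,g,
\]
so that $(1+f)z$ is the traceless Hessian of $f$. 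Together with $\mathrm{div}\,z=0$ (constant scalar curvature) this yields the identity $\mathrm{div}(z(\nabla f,\,\cdot\,))=(1+f)|z|^2$ quoted in the introduction, hence $\int_M(1+f)|z|^2\,dv_g=0$ and, after one integration by parts, $\int_M z(\nabla f,\nabla f)\,dv_g=-\int_M(1+f)^2|z|^2\,dv_g$.

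These first-order relations are mutually equivalent and cannot by themselves close the argument, so the core is a second-order identity that brings in the divergence-free Bach hypothesis. As a guiding tool I would record the exact identity $\nabla P = 2(1+f)\,z(\nabla f,\,\cdot\,)$ for $P=|\nabla f|^2+\frac{s}{n(n-1)}f^2$, giving $|\nabla P|\le 2\mu\,|z|\,|\nabla f|$; on the round sphere $z\equiv0$ makes $P$ constant, which is exactly the Obata signature the theorem must recover, and this is where $\mu=\sup_M|1+f|$ naturally enters. I would then apply the Bochner formula $\tfrac12\Delta|z|^2=|\nabla z|^2+\langle z,\Delta z\rangle$, express $\Delta z$ through the Cotton tensor $C_{ijk}=\nabla_i z_{jk}-\nabla_j z_{ik}$ and the curvature action on $z$, and integrate against the weight $1+f$. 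Using $\mathrm{tr}_g f=-\frac{s}{n-1}f$ and $\int_M f|z|^2=-\int_M|z|^2$, and integrating the Cotton term by parts, the cross term $\int_M\langle\nabla f,\;C_{\cdot jk}z^{jk}\rangle$ drops out precisely because $\langle i_XC,z\rangle=0$ means $C_{ijk}z^{jk}=0$. This should produce a master identity of the form
\[
\frac{s}{2(n-1)}\int_M|z|^2\,dv_g = \int_M (1+f)\,|\nabla z|^2\,dv_g-\frac12\int_M(1+f)\,|C|^2\,dv_g+\int_M(1+f)\,\mathcal{Q}(z)\,dv_g,
\]
where $\mathcal{Q}(z)$ collects the contractions of $z$ with the Ricci and Weyl tensors.

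The decisive step is controlling the curvature terms. The Ricci part of $\mathcal{Q}(z)$ reduces to multiples of $\frac{s}{n}|z|^2$ and the cubic $\mathrm{tr}(z^3)$, bounded pointwise by $|z|^2$ and $|z|^3$; for the Weyl part I would differentiate (\ref{cpe0}) to obtain the Cotton identity $(1+f)C_{ijk}=-{\mathcal W}_{ijkl}\nabla^lf+(\text{terms algebraic in }z\otimes\nabla f)$ and, using $C_{ijk}z^{jk}=0$ once more, rewrite $\int_M(1+f)|C|^2$ and the Weyl contraction in $\mathcal{Q}(z)$ in terms of $|z|$, $|\nabla f|$, and $|z(\nabla f,\,\cdot\,)|\le|z|\,|\nabla f|$. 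Invoking $|1+f|\le\mu$ and the gap bound $|z|^2\le\frac{s^2}{4n(n-1)\mu^2}$ then makes the net curvature contribution subordinate, so that the identity cannot hold with a positive left-hand side unless $z\equiv0$; here the factor $4n(n-1)$ emerges from completing the square against the coefficient $\frac{s}{n(n-1)}$ and the $\mu^2$ from $(1+f)^2\le\mu^2$. Once $z\equiv0$, $g$ is Einstein and (\ref{cpe0}) becomes $D_gdf=-\frac{s}{n(n-1)}f\,g$ with $f$ non-constant and $s>0$, so $(M,g)$ is isometric to a standard $n$-sphere by Obata's theorem (\cite{Ob}).

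The main obstacle is exactly the Weyl term $\int_M(1+f)\,{\mathcal W}_{ikjl}z^{ij}z^{kl}\,dv_g$, together with its companion ${\mathcal W}_{ijkl}\nabla^lf\,z^{jk}$ entering through the Cotton tensor: without a structural hypothesis taming it, a gap result of this strength simply cannot come from the first-order identities. The divergence-free Bach condition, in the form $C_{ijk}z^{jk}=0$, is what eliminates the Cotton cross terms and lets the residual Weyl contributions be absorbed into the pointwise estimate. The remaining difficulty is purely quantitative, namely tracking the dimensional constants carefully enough that the threshold comes out exactly as $\frac{s^2}{4n(n-1)\mu^2}$ rather than some weaker bound.
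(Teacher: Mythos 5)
Your endgame (force $z_g\equiv 0$, then Obata) and your first-order identities (the reformulation (\ref{cpe1}), $\mathrm{div}(z(\nabla f,\cdot))=(1+f)|z|^2$, hence $\int_M(1+f)|z|^2=0$) agree with the paper, but the core of your argument --- the weighted Bochner ``master identity'' --- contains gaps I do not see how to close. First, the terms $\int_M(1+f)|\nabla z|^2$ and $\int_M(1+f)|C|^2$ carry the weight $1+f$, which changes sign (if it did not, i.e.\ $f\ge -1$, the theorem is trivial by the remark in the introduction); these integrals are therefore sign-indefinite, and your absorption step never addresses them --- they cannot be dropped or ``made subordinate'' by any pointwise pinching of $|z|$. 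Second, the Weyl term in your $\mathcal{Q}(z)$ is the full contraction $\langle \mathring{\mathcal W}z,z\rangle={\mathcal W}_{ikjl}z^{ij}z^{kl}$, whereas the Cotton identity you invoke, $(1+f)C=\tilde{i}_{\nabla f}{\mathcal W}-(n-1)T$ (this is (\ref{eqn09})), controls only the radial components ${\mathcal W}(\cdot,\cdot,\cdot,\nabla f)$; nothing in the hypotheses bounds the full Weyl tensor pointwise by $|z|$, $|\nabla f|$ and $|i_{\nabla f}z|$, so this term cannot be rewritten as you propose. Third, even setting those aside, bounding the cubic term crudely by $|\mathrm{tr}(z^3)|\le|z|^3$ cannot produce the threshold $\frac{s^2}{4n(n-1)\mu^2}$: one needs Okumura's inequality $|\mathrm{tr}(z^3)|\le\frac{n-2}{\sqrt{n(n-1)}}|z|^3$ (Lemma~\ref{lem2017-12-24-2}), whose constant is exactly what matches the hypothesis.

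The repair is to use the hypothesis as a first-order identity instead of through a Bochner formula. By Proposition~\ref{lem02}, $\langle i_XC,z\rangle=\frac12 X(|z|^2)+\delta(z\circ z)(X)$, so the assumption says precisely $\frac12 d|z|^2=-\delta(z\circ z)$, and hence $\frac12\Delta|z|^2=\delta\delta(z\circ z)$ (equation (\ref{eqn07})). Multiplying by $1+f$ and integrating by parts --- using (\ref{cpe1}), $\Delta f=-\frac{s}{n-1}f$, and $\int_M f|z|^2=-\int_M|z|^2$ (your own identity (\ref{geq2})) --- yields
$$
\int_M (1+f)\langle z\circ z,z\rangle=\frac{(n-2)s}{2n(n-1)}\int_M|z|^2,
$$
with no $|\nabla z|^2$, $|C|^2$, or Weyl terms at all (this is Lemma~\ref{glem1}). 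Applying Okumura's inequality separately on $\{f\le-1\}$ and $\{f>-1\}$, so that the sign of $1+f$ is absorbed into $\mu$, gives $\int_M\bigl(\frac{s}{2\sqrt{n(n-1)}}-\mu|z|\bigr)|z|^2\le 0$, while the pinching hypothesis makes the integrand nonnegative. Finally, because the hypothesis is a non-strict inequality, the borderline alternative $|z|\equiv\frac{s}{2\mu\sqrt{n(n-1)}}$ must be excluded (your sketch skips this): it contradicts $\int_M(1+f)|z|^2=0$, since $\int_M(1+f)=1$ by unit volume and $\int_M f=0$.
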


As mentioned above, when $n \ge 5$, the condition that the Bach tensor is  divergence-free implies
the first hypothesis in Theorem~\ref{main1}.
Thus, for $n\geq 5$ we have the following result.

\begin{corollary}  Let $n\geq 5$ and $(g,f)$ be a non-trivial solution of {\rm (\ref{cpe0})} on an $n$-dimensional compact  manifold $M$ having divergence-free Bach tensor.  If $|z_g|^2\leq \frac {s_g^2}{4 n(n-1)\mu^2}$, then $(M,g)$ is isometric to a standard $n$-sphere.
\end{corollary}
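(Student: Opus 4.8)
The plan is to obtain the Corollary as a direct specialization of Theorem~\ref{main1}, whose two hypotheses I only need to verify under the stated assumptions. The single nontrivial input beyond the theorem is the dimensional equivalence already recorded in the introduction and established in Proposition~\ref{lem02}. Recall that a solution $(g,f)$ of \eqref{cpe0} automatically has constant scalar curvature, so the scalar part of the Cotton tensor $C$ drops out and $C$ reduces to the antisymmetrized covariant derivative of the traceless Ricci tensor $z_g$. With this reduction in force, Proposition~\ref{lem02} states that for $n \ge 5$ the divergence-free condition $\delta B = 0$ on the Bach tensor is \emph{equivalent} to the pointwise identity $\langle i_X C, z_g\rangle = 0$ for every vector field $X$.

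The first step is therefore to invoke Proposition~\ref{lem02}: since $n \ge 5$, the hypothesis that $g$ has divergence-free Bach tensor translates into exactly the first hypothesis of Theorem~\ref{main1}, namely $\langle i_X C, z_g\rangle = 0$ for all $X$. The second hypothesis of Theorem~\ref{main1}, the curvature gap $|z_g|^2 \le \frac{s_g^2}{4n(n-1)\mu^2}$ with $\mu$ defined as in the introduction, is assumed verbatim in the Corollary. Because $n \ge 5$ certainly implies $n \ge 4$, both hypotheses of Theorem~\ref{main1} hold for the non-trivial solution $(g,f)$, and I would then apply Theorem~\ref{main1} directly to conclude that $(M,g)$ is isometric to a standard $n$-sphere.

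I expect no genuine obstacle at the level of the Corollary itself: all the analytic and geometric work has already been carried out, either inside the proof of Theorem~\ref{main1} or in establishing the equivalence of Proposition~\ref{lem02}. The only point deserving care is the dimensional restriction. The passage from $\delta B = 0$ to the Cotton--Ricci pairing is a genuine feature of $n \ge 5$; when $n = 4$ the Bach tensor is automatically divergence-free and so $\delta B = 0$ carries no information, which is precisely why Theorem~\ref{main1} must assume the pairing condition $\langle i_X C, z_g\rangle = 0$ directly (valid for $n \ge 4$) while the Corollary, phrased through the Bach tensor, is restricted to $n \ge 5$. Confirming that the constant $\frac{1}{4n(n-1)\mu^2}$ and the quantity $\mu$ are used identically in both statements completes the verification that the Corollary follows cleanly from the theorem.
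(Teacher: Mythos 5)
Your proposal is correct and follows exactly the paper's own route: since $n\geq 5$ makes $n-4\neq 0$ in the identity $(n-2)\,\delta B(X) = -\frac{n-4}{n-2}\langle i_XC, z_g\rangle$ of Proposition~\ref{lem02}, the hypothesis $\delta B=0$ is equivalent to $\langle i_XC, z_g\rangle=0$ for all $X$, and Theorem~\ref{main1} then applies verbatim. Nothing further is needed.
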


In  \cite{ych2} and  \cite{erra}, we proved the Besse conjecture is true when $(M, g)$ has harmonic curvature.
In this case, the traceless Ricci tensor $z_g$ can be decomposed into $\nabla f$-direction and its orthogonal complement. In other words, for a vector $X$ orthogonal to  $\nabla f$, we have
$z_g(\nabla f, X) = 0$, and so $z_g$ can be controlled by $z_g(N, \cdot) = i_Nz_g$ with $N = \frac{\nabla f}{|\nabla f|}$ on each hypersurface 
given by a level set of $f$. Related to $i_Nz_g$, we have the following gap property.

\begin{theorem}\label{main2}
Let $(g,f)$ be a non-trivial solution of {\rm (\ref{cpe0})} on an $n$-dimensional compact manifold $M$. 
Assume that $\langle i_XC, z_g\rangle=0$ for any vector $X$ and $n\geq 4$. 
If $$|z_g|^2 \leq \min \left\{ 2|i_Nz_g|^2, \frac {s_g^2}{4n(n-1)}\right\},$$
then $(M,g)$ is isometric to a standard sphere.
\end{theorem}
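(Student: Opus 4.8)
The plan is to show that the two gap hypotheses force the traceless Ricci tensor $z_g$ to vanish identically; once $(M,g)$ is Einstein, the nontriviality of $f$ together with Obata's theorem (\cite{Ob}) forces $(M,g)$ to be a round sphere, exactly as recorded in the introduction. Throughout I would use that $s_g$ is constant, so that $\mathrm{tr}_g f=-\frac{s_g}{n-1}f$ and, rewriting (\ref{cpe0}), $(1+f)z_g=D_gdf+\frac{s_g f}{n(n-1)}g$; in particular the right-hand side is trace-free, and the identity $\mathrm{div}(z_g(\nabla f,\cdot))=(1+f)|z_g|^2$ recorded in the introduction gives $\int_M(1+f)|z_g|^2=0$, which will be the normalization used to handle weighted integrals later.

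First I would differentiate the rewritten equation and antisymmetrize to obtain a pointwise identity of the schematic form $(1+f)C_{ijk}=\mathcal{W}_{ijkl}\nabla^lf+\frac{n-1}{n-2}(z_{ik}\nabla_jf-z_{jk}\nabla_if)+\frac1{n-2}(\ldots)$, expressing $(1+f)C$ through the Weyl tensor evaluated on $\nabla f$ and a wedge of $z_g$ with $df$. The hypothesis $\langle i_XC,z_g\rangle=0$ says $C_{ijk}z^{jk}=0$; combined with the first Bianchi identity $C_{ijk}+C_{jki}+C_{kij}=0$ this annihilates every double contraction of $C$ against $z_g$. Using this to kill the $z$-wedge terms, contracting the displayed identity with $C^{ijk}$ gives the pointwise relation $(1+f)|C|^2=\langle\mathcal{W}(\cdot,\cdot,\cdot,\nabla f),C\rangle$; integrating this, writing $C_{ijk}=\nabla_iz_{jk}-\nabla_jz_{ik}$, integrating by parts, and substituting the Hessian $D_gdf=(1+f)z_g-\frac{s_gf}{n(n-1)}g$ from (\ref{cpe0}) yields the integral identity $\int_M(1+f)|C|^2=-2\int_M(1+f)\mathcal{W}_{ijkl}z^{il}z^{jk}$.

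Next I would bring in a Weitzenböck identity for $z_g$. Writing $\nabla^kC_{kij}=\bar\Delta z_{ij}-(\mathrm{Riem}\cdot z)_{ij}$ (commuting derivatives and using $\delta z_g=0$), pairing with $(1+f)z^{ij}$ and integrating by parts produces, after using $C_{kij}z^{ij}=0$ and $C_{kij}\nabla^kz^{ij}=\frac12|C|^2$, a relation among $\int_M(1+f)|C|^2$, $\int_M(1+f)|\nabla z_g|^2$, the unweighted term $\int_M|z_g|^2$ coming from $\frac12\Delta|z_g|^2$ paired with $(1+f)$, a cubic term $\int_M(1+f)\,\mathrm{tr}(z_g^3)$ from the Ricci part of $\mathrm{Riem}\cdot z$, and once more the Weyl contraction $\int_M(1+f)\mathcal{W}_{ijkl}z^{il}z^{jk}$. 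The structural heart of the argument is that this last Weyl term is precisely the quantity computed in the previous step, so substituting $\int_M(1+f)|C|^2=-2\int_M(1+f)\mathcal{W}(z,z)$ eliminates the Weyl tensor entirely, leaving a closed relation among $\int_M(1+f)|\nabla z_g|^2$, $\int_M|z_g|^2$, $\int_M(1+f)|C|^2$ and the cubic term.

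The main obstacle is then to control the cubic term $\int_M(1+f)\,\mathrm{tr}(z_g^3)$ and close the estimate. I would handle its weight through the divergence identity $\mathrm{div}\big(z_g^2(\nabla f,\cdot)\big)=(1+f)\,\mathrm{tr}(z_g^3)-\frac{s_gf}{n(n-1)}|z_g|^2+(\text{gradient terms})$, which together with $\int_M f|z_g|^2=-\int_M|z_g|^2$ trades the cubic integral for $\frac{s_g}{n(n-1)}\int_M|z_g|^2$ plus terms already present. At this stage the pointwise bound $|z_g|^2\le\frac{s_g^2}{4n(n-1)}$, i.e.\ $|z_g|\le\frac{s_g}{2\sqrt{n(n-1)}}$, makes the remaining cubic contribution dominated by the positive quadratic term, the factor $\tfrac12$ being exactly what produces the sharp constant $\tfrac14$; the complementary hypothesis $|z_g|^2\le 2|i_Nz_g|^2$, equivalent pointwise to $z_g(N,N)^2\ge|z_g^{TT}|^2$ with $N=\nabla f/|\nabla f|$, is used to absorb the sign-indefinite, $(1+f)$-weighted remainder where $z_g$ fails to concentrate in the $N$-direction --- precisely the place where the harmonic-curvature reduction of \cite{ych2} could invoke the stronger splitting $z_g(\nabla f,X)=0$. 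Assembling the terms should yield an inequality of the form $\int_M(1+f)|\nabla z_g|^2+\int_M(\text{nonnegative})\le 0$, forcing $z_g\equiv0$, hence $g$ Einstein and, being a nontrivial solution, isometric to the standard sphere. I expect the delicate points to be the exact bookkeeping of constants in the cubic estimate and the verification that the $i_Nz_g$-hypothesis genuinely dominates the indefinite remainder, the two independent computations of $\int_M(1+f)\mathcal{W}(z,z)$ being what makes the whole scheme close.
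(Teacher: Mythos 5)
Your first two steps are correct and parallel the paper: the identity $(1+f)C=\tilde i_{\nabla f}\mathcal W-(n-1)T$ is exactly (\ref{eqn09}), and under $\langle i_XC,z\rangle=0$ the contraction with $C$ followed by the integration by parts you describe does give $\int_M(1+f)|C|^2=-2\int_M(1+f)\mathcal W_{ijkl}z_{il}z_{jk}$. The genuine gap is in the closing scheme. Every curvature-squared quantity you produce carries the weight $1+f$, which changes sign: if $f\ge-1$ the theorem is immediate (as the introduction notes), so in the only case that matters $1+f$ is negative somewhere, and an inequality of the shape $\int_M(1+f)|\nabla z|^2+\int_M(\text{nonnegative})\le0$ forces nothing. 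Worse, if you actually carry out your Weitzenb\"ock step --- which is Proposition~\ref{lem1127} of the paper, $\delta C=D^*Dz+\frac n{n-2}z\circ z+\frac s{n-1}z-\frac1{n-2}|z|^2g-\mathring{\mathcal W}z$, paired with $(1+f)z$ and integrated --- the quadratic and cubic terms cancel \emph{identically}: the pairing yields $\frac12\int(1+f)|C|^2=\int(1+f)|Dz|^2-\frac s{2(n-1)}\int|z|^2+\frac n{n-2}\int(1+f)\langle z\circ z,z\rangle+\int(1+f)\mathcal W_{ijkl}z_{il}z_{jk}$, and Lemma~\ref{glem1} (valid under your same hypothesis) says precisely that $\frac n{n-2}\int(1+f)\langle z\circ z,z\rangle=\frac s{2(n-1)}\int|z|^2$. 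Eliminating the Weyl term between your two identities then collapses everything to a relation between $\int_M(1+f)|C|^2$ and $\int_M(1+f)|Dz|^2$ alone --- a sign-indefinite identity from which nothing follows, and in which neither gap hypothesis ever gets to act. The ``structural heart'' you identify, computing $\int(1+f)\mathcal W(z,z)$ twice, is self-cancelling rather than self-closing, and your use of $|z|^2\le2|i_Nz|^2$ (``to absorb the sign-indefinite remainder'') names no actual mechanism.

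The paper closes the argument with two moves your proposal has no substitute for. First, the basic nonnegative quantity is not a weighted curvature norm but $|T|^2$ itself, computed in two independent ways: directly, via Lemma~\ref{glem4}, $\frac{(n-1)(n-2)}2\int|T|^2=\frac{n-1}{n-2}\int|z|^2|\nabla f|^2-\frac n{n-2}\int z\circ z(\nabla f,\nabla f)$, and, via the triple divergence of $T$ (Propositions~\ref{glem3} and~\ref{glem5}), identity (\ref{eqn2018-1-22-2}). Comparing the two is the one precise place where $|z|^2\le2|i_Nz|^2$ enters: it makes $|z|^2|\nabla f|^2-2\,z\circ z(\nabla f,\nabla f)\le0$ pointwise, hence $\frac sn\int f^2|z|^2+\frac{2(n-1)}{n-2}\int f(1+f)\langle z\circ z,z\rangle\le0$. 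Second, Lemma~\ref{glem1} is used not to cancel terms but to trade the sign-indefinite weight $f(1+f)$ for $f^2$ plus controlled terms, so that after Okumura's inequality (Lemma~\ref{lem2017-12-24-2}) the final estimate reads $\int_M(1+f^2)|z|^2\bigl(\frac s{2\sqrt{n(n-1)}}-|z|\bigr)\le0$ with the strictly positive weight $1+f^2$; only then does the pointwise bound $|z|\le\frac s{2\sqrt{n(n-1)}}$ force $z\equiv0$, the equality case $|z|\equiv\frac s{2\sqrt{n(n-1)}}$ being excluded by $\int_M(1+f)|z|^2=0$ and $\int_Mf=0$. Your proposal would need to be rebuilt around a genuinely nonnegative quantity of this kind; as written, it cannot be completed.
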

It is comparable with Theorem 2 of \cite{Bal}, which states that a non-trivial solution $(g,f)$ of (\ref{cpe0}) 
has zero radial Weyl curvature with $$ |z_g|^2 \leq \frac {s_g^2}{n(n-1)},$$ then $(M,g)$ is isometric to a standard sphere.
We say that $g$ has zero radial Weyl curvature if $\tilde{i}_{\nabla f}{\mathcal W}=0$, 
where $\tilde{i}_X$ is defined in (\ref{lateri}).

This paper is organized as follows.
In Section 2, we give some properties of Bach tensor and Cotton tensor with their divergences. In particular, we include the fact that the divergence of Bach tensor is given by the inner product of the Cotton tensor with traceless Ricci tensor (Proposition~\ref{lem02}). In Section 3, we introduce a covariant $3$-tensor and derive some properties of it to handle the critical point equation. In section 4 and 5, we prove our main results, Theorem~\ref{main1} and \ref{main2}.

\section{Divergences of a Bach tensor}

Let $(M, g)$ be an $n$-dimensional Riemannian manifold. For convenience, we denote $s_g, r_g$
 and $z_g$ by $s$, $r$ and $z$, respectively, if there is no ambiguity. 
 Throughout the paper, we will assume that the dimension $n\geq 4$. 

Let $D$ be the Levi-Civita connection on $(M, g)$ and let us denote by $C^{\infty}(S^2M)$ the space of sections of symmetric $2$-tensors on $(M,g)$. 
 Then, the differential operator $d^D$ from $C^{\infty}(S^2M)$ to 
  $C^\infty\left(\Lambda^2 M \otimes T^*M\right)$ is defined by
$$ 
d^D \eta(X,Y,Z)= (D_X \eta)(Y,Z)-(D_Y \eta)(X,Z)
$$
for $\eta \in C^{\infty}(S^2M)$ and vectors $X, Y$, and $Z$. In particular, the following result is well known (\cite{Be}): under the identification of $C^\infty(T^*M \otimes \Lambda^2M)$ with
$C^\infty(\Lambda^2M \otimes T^*M)$,
\be 
\delta R=-d^Dr. \label{eqn02}
\ee

For a function $\varphi \in C^\infty(M)$ and $\eta \in C^{\infty}(S^2M)$,
$d\varphi \wedge \eta $ is defined by
$$
(d\varphi \wedge \eta) (X,Y,Z)= d\varphi(X) \eta(Y,Z)-d\varphi(Y) \eta(X,Z).
$$
Here, $d\varphi$ denotes the usual total differential of $\varphi$.

The Cotton tensor $C\in \Gamma(\Lambda^2M\otimes T^*M)$ is defined by
\be C=d^Dr -\frac 1{2(n-1)}\, {ds}\wedge g\label{eqn03}\ee
and
 the $n$-dimensional Bach tensor $B$ is defined by
$$ B=\frac 1{n-3}\, \delta^D\delta {\mathcal W}+ \frac 1{n-2}\, \mathring{\mathcal W}z.
$$
Here, 
$\delta^D$ is the $L^2$ adjoint operator of $d^D$, and
$$\mathring{\mathcal W}z(X,Y)=\sum_{i=1}^n z({\mathcal W}(X, E_i)Y, E_i)$$
for an orthonormal frame $\{E_i\}_{i=1}^n$. $\mathring{R}r$ is defined similarly.
From now on, we will omit the summation notation, as we employ the Einstein convention.

Because
$$ \delta {\mathcal W}= -\frac {n-3}{n-2}\, d^D\left(r-\frac {s}{2(n-1)}\, g\right),$$
we have
\be C= -\frac {n-2}{n-3}\, \delta {\mathcal W} \quad \mbox{and} \quad
\delta C= -\frac {n-2}{n-3} \delta^D\delta {\mathcal W}. \label{eqn04}
\ee
As a consequence, we have
\be (n-2) B=  -\delta C +\mathring{\mathcal W}z . \label{eqn05}
\ee

\begin{proposition}{\rm (Corollary 1.22 of \cite{Be})} \label{lem01} 
For any tensor $h$, we have
$$
D^2_{X,Y} h -D^2_{Y,X}h= -R(X,Y)h
$$
and
$$ D^3_{X,Y,Z}h -D^3_{Y,X,Z}h= -R(X,Y)D_Z h +D_{R(X,Y)Z}h.$$
\end{proposition}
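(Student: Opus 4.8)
The plan is to establish both identities as instances of the Ricci commutation formulas, proving the first by direct expansion and the second by bootstrapping from the first. Since every term in both equations is tensorial (that is, $C^\infty(M)$-linear) in the arguments $X,Y,Z$, it suffices to verify them for coordinate vector fields, or more conveniently to check them at an arbitrary fixed point $p\in M$ after passing to normal coordinates, so that $D_XY(p)=0$ and $[X,Y](p)=0$ for the coordinate frame. This reduction is what keeps the computation manageable.

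For the first identity, I would start from the definition of the second covariant derivative,
$$D^2_{X,Y}h = D_X(D_Yh) - D_{D_XY}h,$$
and subtract the expression obtained by interchanging $X$ and $Y$. The two terms $D_{D_XY}h$ and $D_{D_YX}h$ combine, via torsion-freeness $D_XY - D_YX = [X,Y]$, into $-D_{[X,Y]}h$, leaving
$$D^2_{X,Y}h - D^2_{Y,X}h = D_XD_Yh - D_YD_Xh - D_{[X,Y]}h.$$
This is precisely the curvature endomorphism acting on the tensor $h$ as a derivation, which under the sign convention of \cite{Be} equals $-R(X,Y)h$.

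For the second identity, the clean route is to apply the first identity to the tensor $k := Dh$, which has one additional covariant slot. The first identity gives $D^2_{X,Y}k - D^2_{Y,X}k = -R(X,Y)k$. Evaluating both sides on $Z$ in the new slot and recalling that $D^3_{X,Y,Z}h = (D^2_{X,Y}k)(Z,\cdots)$, the left-hand side becomes $D^3_{X,Y,Z}h - D^3_{Y,X,Z}h$. On the right-hand side, $R(X,Y)$ acts on $k$ as a derivation across all of its slots; I would split this action into the piece hitting the new derivative slot, which produces $D_{R(X,Y)Z}h$, and the piece hitting the original slots of $h$, which reassembles into $-R(X,Y)(D_Zh)$. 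Collecting the two contributions yields the stated formula.

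The routine but error-prone part, and the only real obstacle, is the bookkeeping of signs: one must pin down the convention under which $R(X,Y)$ acts as a derivation on covariant tensors (each slot contributing a term of the form $-k(\ldots,R(X,Y)\,\cdot\,,\ldots)$), and ensure the curvature-on-vectors convention is the one used in \cite{Be}, consistent with the $-R(X,Y)h$ appearing in the first identity. The normal-coordinate reduction is the main device for handling this, since it annihilates all first-order connection terms at $p$ and isolates exactly the curvature contributions.
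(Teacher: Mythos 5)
The paper offers no proof of this proposition at all --- it is quoted as Corollary 1.22 of \cite{Be} and used as a black box --- so there is no internal argument to compare yours against. Your proof is correct and is the standard one. The first identity is exactly torsion-freeness plus the definition of the curvature operator, with the minus sign coming from Besse's convention $R(X,Y)=D_{[X,Y]}-[D_X,D_Y]$; the second follows, as you say, by applying the first to $k=Dh$, using $D^3 h = D^2(Dh)$, and splitting the derivation action of $R(X,Y)$ on $k$ into the contribution of the new covariant slot (which gives $D_{R(X,Y)Z}h$) and the contributions of the original slots of $h$ (which reassemble into $-R(X,Y)D_Zh$); the signs you track do work out. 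The only step worth making fully explicit is the claim that the commutator $[D_X,D_Y]-D_{[X,Y]}$ acts on tensors as the derivation $-R(X,Y)$: this holds because both operators are derivations of the tensor algebra commuting with contractions, they vanish on functions, and they agree on vector fields, hence they agree on all tensors. That observation is also what justifies the slot-by-slot rule $-k(\ldots,R(X,Y)\,\cdot\,,\ldots)$ on covariant tensors, which your second step relies on.
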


Recall that the Schouten tensor $A$ is defined by 
$$
A= r -\frac {s}{2(n-1)}g$$ 
so that $C=d^DA$. 
The following is Lemma 5.1 of \cite{CC}.   We include the proof for the sake of completeness.

\begin{proposition} \label{lem02} 
For any vector field $X$ we have
$$
(n-2) \delta B (X) = -\frac {n-4}{n-2} \, \langle i_XC, z\rangle = -\frac {n-4}{n-2}\left( \frac 12\, X(|z|^2)+ \delta (z\circ z)(X)\right).
$$
Here,  $$i_XC(Y,Z)=C(X,Y,Z),$$
$$ \langle i_XC, z\rangle = \sum_{i,j=1}^n i_XC(E_i, E_j)z(E_i,E_j),$$
and a $2$-tensor $z\circ z$ is defined by 
$$ z\circ z(X, Y)=\sum_{i=1}^n z(X, E_i)z(E_i, Y)$$
for any orthonormal frame $\{E_i\}_{i=1}^n$ and vector fields $X$, $Y$, and $Z$. 
\end{proposition}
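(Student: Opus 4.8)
The plan is to obtain $\delta B$ by differentiating the purely algebraic identity (\ref{eqn05}). Applying the divergence $\delta$ to $(n-2)B = -\delta C + \mathring{\mathcal W}z$ gives
$$(n-2)\,\delta B = -\delta\delta C + \delta(\mathring{\mathcal W}z),$$
so the computation splits into the double divergence $\delta\delta C$ of the Cotton tensor and the divergence of the Weyl--Ricci contraction $\mathring{\mathcal W}z$. I expect the two pieces to share a common Weyl-curvature term that cancels, leaving only a contraction of the Cotton tensor against $z$; an essentially equivalent route is to compute $\nabla^i B_{ij}$ directly in index notation, which avoids carrying the operator $\delta^D\delta{\mathcal W}$ explicitly.

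For $\delta\delta C$, I would write $C=d^DA$ and express $\delta\delta C$ as a triple covariant derivative of the Schouten tensor $A$ contracted in two index pairs. Commuting the covariant derivatives by Proposition~\ref{lem01} converts the non-commuting part into curvature acting on $DA$; decomposing the full curvature into its Weyl and Schouten pieces then produces a $\mathring{\mathcal W}$-type term together with Ricci/Schouten contractions. For $\delta(\mathring{\mathcal W}z)$ I would use the product rule: the derivative landing on $\mathcal W$ is handled through $\delta{\mathcal W}=-\frac{n-3}{n-2}C$ from (\ref{eqn04}), so it reproduces a Cotton--$z$ contraction, while the derivative landing on $z$ pairs $\mathcal W$ with $Dz$. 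The first and second Bianchi identities for $\mathcal W$, together with the cyclic and trace identities of $C$, are what should make the purely Weyl contributions of the two computations cancel; tracking the surviving constants should produce the factor $-\frac{n-4}{n-2}$ in front of $\langle i_X C, z\rangle$.

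The second equality is purely algebraic and I would prove it directly. Expanding $\langle i_X C,z\rangle=\sum_{ij}C(X,E_i,E_j)z_{ij}$ with $C=d^DA$ and $A=r-\frac{s}{2(n-1)}g$, and substituting $r=z+\frac{s}{n}g$, the trace-free condition $\mathrm{tr}\,z=0$ annihilates the pure $g$-terms. The derivative of $|z|^2$ emerges from the term where both Ricci indices meet $z$, producing $\frac12 X(|z|^2)$, while the remaining term, after using the contracted second Bianchi identity to rewrite $\sum_i z_{ik;i}$ in terms of $ds$, should assemble into $\delta(z\circ z)(X)$.

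The main obstacle is the first, double-divergence step: keeping the index bookkeeping straight through the two commutations and the curvature decomposition, and verifying that the Weyl cubic and Weyl-times-derivative terms cancel exactly so that only $\langle i_X C, z\rangle$ survives with the stated constant. The vanishing of the coefficient $\frac{n-4}{n-2}$ at $n=4$, consistent with the automatic vanishing of $\delta B$ in dimension four, is a convenient sanity check on the constants.
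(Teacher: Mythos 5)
Your plan is essentially the paper's own proof: it splits $(n-2)\delta B$ into $-\delta\delta C+\delta(\mathring{\mathcal W}z)$ via (\ref{eqn05}), computes $\delta\delta C$ by commuting the triple derivatives of $d^DA$ using Proposition~\ref{lem01}, handles $\delta(\mathring{\mathcal W}z)$ by the product rule together with (\ref{eqn04}), and then cancels the Weyl contributions (the paper does this by collecting the curvature--Cotton terms into $\tfrac 12({\mathcal W}-R)\cdot C$ and applying the curvature decomposition plus the trace identity of $C$, rather than Bianchi identities for ${\mathcal W}$, but it is the same cancellation), yielding the factor $-\tfrac{n-4}{n-2}$; the second equality is likewise verified by direct expansion from the definition of $C$ with the contracted Bianchi identity, exactly as the paper indicates. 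So your approach matches the paper's argument in both structure and detail.
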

\begin{proof}
Let  $\{E_i \}_{i=1}^n$ be a geodesic frame. Denoting $z_{ij} = z(E_i, E_j)$ and $r_{ij}=r(E_i, E_j)$, 
it follows from (\ref{eqn04}) and (\ref{eqn05}) that
\bea 
(n-2) \delta B (X)
&= &  -\delta \delta C(X)+ \delta \mathring{\mathcal W}z(X)\\
&=& -\delta \delta C(X)-\frac {n-3}{n-2}C(X, E_i, E_j)z_{ij}
+ \frac 12 {\mathcal W}(E_i, E_j, E_k, X)C(E_i, E_j, E_k).
\eea
Note that
\bea
\delta\delta C(X)&=& \delta \delta d^DA(X) \\
&=& D_{E_i}D_{E_k}(D_{E_k} A(E_i, X)- D_{E_i}  A(E_k, X)) \\
&=&   (D_{E_i}D_{E_k}-D_{E_k}D_{E_i})D_{E_k}A(E_i, X).
\eea
Thus, by Proposition~\ref{lem01} we have
\bea
\delta\delta C(X)&=&R(E_k, E_i)D_{E_k}A(E_i, X)-D_{R(E_k, E_i)E_k} A(E_i, X) \\
&=& - D_{E_k}A(R(E_k, E_i)E_i, X) -D_{E_k}A(E_i, R(E_k, E_i) X) - r_{is}D_{E_s}A(E_i, X)\nonumber\\
&=& r_{kj}D_{E_k}A(E_j, X) +   \langle R(E_k, E_i) E_s, X)D_{E_k}A(E_i, E_s)- r_{ik}D_{E_k}A(E_i, X)\\
&=&   \frac 12 \langle R(E_k, E_i) E_s, X)C(E_k, E_i, E_s). \nonumber
\eea
Hence,  
$$
(n-2)\delta B(X)= -\frac {n-3}{n-2}C(X, E_i, E_j)z_{ij}  +\frac 12 ({\mathcal W}-R)(E_i, E_j, E_k, X)C(E_i, E_j, E_k).$$
From the decomposition of Riemann tensor, it follows 
$${\mathcal W}_{ijkl}= R_{ijkl} -\frac 1{n-2}( g_{ik}A_{jl}+g_{jl}A_{ik}- g_{jk}A_{il}-g_{il}A_{jk} ), 
$$
and so 
$$ ({\mathcal W}-R)(E_i, E_j, E_k, E_l)C(E_i, E_j, E_k)=-\frac 2{n-2} (A_{jl}C_{iji}+A_{ik}C_{ilk})= -\frac 2{n-2}r_{ik}C_{ilk}.
$$
Here, $C_{ijk}=C(E_i, E_j, E_k)$ and 
we have used the fact that  $\sum_iC(E_i, Y, E_i)=0$ for any $Y$. By substituting these results, we obtain the desired equation:
$$
(n-2) \delta B (X) = -\frac {n-4}{n-2} \, C(X, E_j, E_k)z_{jk}.
$$
Finally, it is obvious from the definition of $C$ that 
$$\langle i_XC, z\rangle= \frac 12\, d|z|^2(X)+\delta (z\circ z)(X).$$
\end{proof}
By Proposition~\ref{lem02}, it is clear that $\delta B=0$ when $n=4$, and $\langle i_XC, z\rangle =0$ for any vector field $X$ if and only if $\delta B=0$ when $n\geq 5$.
In particular, since
$$0=\langle i_XC, z\rangle= \frac 12\, d|z|^2(X)+\delta (z\circ z)(X),$$
we have
\be  \frac 12 \, \tr |z|^2=\delta \delta (z\circ z). \label{eqn07} 
\ee

The following result holds when the scalar curvature is constant.
\begin{proposition} {\rm ((10) of \cite{ych1})} \label{lem1127} Assume that $s_g$ is constant. Then, 
$$\delta d^Dr = D^*Dz+\frac n{n-2}\, z\circ z +\frac {s}{n-1} z -\frac 1{n-2}|z|^2g -\mathring{\mathcal W}z.
$$
\end{proposition}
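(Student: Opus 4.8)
The plan is to reduce everything to the traceless Ricci tensor $z=r-\frac sn g$ and then convert the operator $\delta d^D$ acting on $z$ into the rough Laplacian $D^*D$ plus purely algebraic curvature terms by commuting covariant derivatives. Since $s$ is constant, $d^D\!\left(\frac sn g\right)=\frac1n\,ds\wedge g=0$, so $d^Dr=d^Dz$ and hence $\delta d^Dr=\delta d^Dz$; this is what makes the constant-scalar-curvature hypothesis essential. Moreover, the contracted second Bianchi identity gives $\delta r=-\frac12\,ds=0$, whence $\delta z=0$ as well, a fact I will use to discard one term below.

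Working in a geodesic frame $\{E_i\}$ and using the definitions of $d^D$ and $\delta$, I would first expand
$$\delta d^Dz(E_j,E_k)=(D^*Dz)(E_j,E_k)+\sum_i(D^2_{E_i,E_j}z)(E_i,E_k),$$
where $D^*Dz=-\sum_iD^2_{E_i,E_i}z$. The second, unwanted term is then handled with the Ricci identity of Proposition~\ref{lem01}: writing $D^2_{E_i,E_j}z=D^2_{E_j,E_i}z-R(E_i,E_j)z$, the symmetrized piece contracts to $D_{E_j}\big(\sum_i(D_{E_i}z)(E_i,\cdot)\big)=-D_{E_j}(\delta z)=0$ because $\delta z=0$. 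This leaves a single curvature term, so that
$$\delta d^Dr(E_j,E_k)=(D^*Dz)(E_j,E_k)-\sum_i\big(R(E_i,E_j)z\big)(E_i,E_k).$$

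The remaining and main task is to evaluate the algebraic curvature contraction. Treating $R(X,Y)$ as a derivation on the symmetric $2$-tensor $z$, one obtains
$$-\sum_i\big(R(E_i,E_j)z\big)(E_i,E_k)=(r\circ z)_{jk}+\sum_{i,l}R_{ijkl}\,z_{il},$$
using $\sum_iR_{ijil}=r_{jl}$. I would then substitute the Weyl--Schouten decomposition of $R_{ijkl}$ displayed in the text. The Weyl part produces $-\mathring{\mathcal W}z$, while the Schouten part, after setting $A=z+\frac{s(n-2)}{2n(n-1)}g$ and repeatedly using $\operatorname{tr}z=0$, contributes $\frac{2}{n-2}\,z\circ z+\frac{s}{n(n-1)}z-\frac1{n-2}|z|^2g$. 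Combining these with $r\circ z=z\circ z+\frac sn z$ and collecting coefficients (the $z\circ z$ coefficient becomes $1+\frac2{n-2}=\frac n{n-2}$ and the $z$ coefficient becomes $\frac sn+\frac s{n(n-1)}=\frac s{n-1}$) yields exactly the claimed formula.

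The step I expect to be the main obstacle is this last contraction: one must be scrupulous about the sign conventions linking the curvature operator $R(X,Y)$, its action as a derivation on symmetric $2$-tensors, the Ricci contraction $\sum_iR_{ijil}=r_{jl}$, and the Weyl--Schouten decomposition, since an inconsistent choice changes the coefficient of $z\circ z$ (e.g.\ to $\frac{4-n}{n-2}$) and the sign of $\mathring{\mathcal W}z$. Verifying internal consistency --- for instance by tracing the decomposition itself to recover $r_{jl}$ --- is the safest way to pin down the signs before collecting terms.
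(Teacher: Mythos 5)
Your proof is correct, and it takes a more self-contained route than the paper. The paper's own proof is citation-based: it quotes Besse's identity 4.71, $\delta d^Dr = D^*Dr + \tfrac12 Dds + r\circ r - \mathring{R}r$ (valid for an arbitrary metric), together with a formula from \cite{ych1} expressing $\mathring{R}r$ through $\mathring{\mathcal W}z$, $z\circ z$, $|z|^2g$ and $z$, and then substitutes $r = z+\tfrac{s}{n}g$ and $Dds=0$. You instead re-derive both ingredients: your commutation argument with Proposition~\ref{lem01} and $\delta z=0$ is precisely a proof of Besse's 4.71 specialized to constant scalar curvature --- indeed your intermediate identity $\delta d^Dr = D^*Dz + r\circ z + \sum_{i,l}R_{ijkl}z_{il}$ coincides with it, since $\sum_{i,l}R_{ijkl}z_{il} = -\mathring{R}z$ under the paper's conventions --- and your contraction of the Weyl--Schouten decomposition is in effect a proof of the quoted $\mathring{R}r$ formula. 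I checked the details: with the conventions fixed by the paper (where tracing the displayed decomposition of $R$ gives $r_{jl}=\sum_i R_{ijil}$, exactly the consistency test you propose), the Weyl part contracts to $-\mathring{\mathcal W}z$, the Schouten part gives $\tfrac{2}{n-2}z\circ z + \tfrac{s}{n(n-1)}z - \tfrac{1}{n-2}|z|^2 g$, and your coefficient collection $1+\tfrac{2}{n-2}=\tfrac{n}{n-2}$, $\tfrac sn + \tfrac{s}{n(n-1)} = \tfrac{s}{n-1}$ is right. What the paper's route buys is brevity; what yours buys is a self-contained argument that makes visible exactly where constancy of $s$ enters (through $d^Dr=d^Dz$, $Dds=0$, and $\delta z=0$), at the cost of having to manage the sign conventions you correctly flag as the delicate point.
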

\begin{proof}
The proof follows from the identity (see 4.71 in \cite{Be})
$$\delta d^Dr = D^* Dr +\frac 12 Dds +r\circ r -\mathring{R}r, $$
and the relation in \cite{ych1}
$$ \mathring{R}r= \mathring{\mathcal W}z+\frac 1{n-2} |z|^2 g +\frac {(n-2)s}{n(n-1)}\, z -\frac 2{n-2}z\circ z +\frac {s^2}{n^2}g, $$
which  comes from the decomposition of the Riemann tensor.
\end{proof}

\section{ Critical metrics}
In this section, we turn our attention to a non-trivial solution $(g,f)$ of (\ref{cpe0}). 
To do this, we will introduce a covariant $3$-tensor $T$ defined by 
$$ T= \frac 1{n-2}\, df  \wedge z +\frac 1{(n-1)(n-2)}\, i_{\nabla f }z \wedge g,$$
Also we define the interior product $\tilde{i}$ to the final factor by
\be \tilde{i}_{V}\omega (X,Y, Z)= \omega(X,Y, Z, V) \label{lateri}\ee
for a $(4,0)$-tensor $\omega$ and a vector field $V$.

Now, from the critical metric equation (\ref{cpe0}) we have
\be (1+f)z = Ddf +\frac {sf}{n(n-1)}g.\label{cpe1}\ee
By applying $d^D$ to both sides of this equation and using the Ricci identity
$$ d^D Ddf(X,Y,Z)=R(X,Y, Z, \nabla f)$$
for any vector fields $X$, $Y$, $Z$ on $M$, we obtain
$$ (df \wedge z +(1+f)d^Dz)(X,Y,Z)=\tilde{i}_{\nabla f}R(X,Y,Z)+\frac s{n(n-1)}\, df \wedge g (X,Y,Z).$$
Since $C=d^Dz$ when $s$ is constant, we obtain 
\be (1+f)  \, C  =\tilde{i}_{\nabla f}R - df\wedge z +\frac s{n(n-1)}\, df \wedge g =  \tilde{i}_{\nabla f}{\mathcal W}-(n-1)\, T. \label{eqn09}\ee
Here, we used the fact that 
$$
\tilde{i}_{\nabla f}R= \tilde{i}_{\nabla f}{\mathcal W} -
\frac 1{n-2}i_{\nabla f}r \wedge g -\frac 1{n-2}df \wedge r +\frac s{(n-1)(n-2)}\, df \wedge g,
$$
which follows from the curvature decomposition (c.f. \cite[1.116, p.48]{Be})
\bea R(X,Y,Z,W)&=& {\mathcal W}(X,Y,Z,W)+\frac 1{n-2}(g(X,Z)r(Y,W)+g(Y,W)r(X,Z)\\
& &-g(Y,Z)r(X,W) -g(X,W)r(Y,Z))\\
& & -\frac s{(n-1)(n-2)}(g(X,Z)g(Y,W)-g(Y,Z)g(X,W)).
\eea

From the definition of the Bach tensor (\ref{eqn05}) and (\ref{eqn09}), we have 
\be
(n-2)B= -\delta C+\mathring{\mathcal W}z
= -\delta \left(  \frac 1{1+f }\, \tilde{i}_{\nabla f }{\mathcal W}  -(n-1)\frac T{1+f }\right)+\mathring{\mathcal W}z. 
\label{eqn10}\ee
Since
$$ \delta \tilde{i}_{\nabla f }{\mathcal W}(X,Y)= -\frac {n-3}{n-2} \, d^Dr(Y,\nabla f , X)+(1+f)  \,\mathring{\mathcal W}z(X,Y),$$
we have
\bea
 \lefteqn{-\delta \left( \frac 1{1+f}\, \tilde{i}_{\nabla f }{\mathcal W}\right)(X,Y)=
   -\frac 1{(1+f) ^2} \, {\mathcal W}(\nabla f , X, Y, \nabla f )   -\frac 1{1+f }\,  \delta \tilde{i}_{\nabla f }{\mathcal W}(X,Y)} \\ 
 &=& \frac 1{(1+f) ^2} \, {\mathcal W}(X, \nabla f , Y, \nabla f) +\frac {n-3}{n-2}\,\frac 1{1+f } d^Dr(Y, \nabla f , X) -\mathring{\mathcal W}z(X,Y).
\eea
Therefore, it follows from (\ref{eqn09}) and (\ref{eqn10}) that
\be
(n-2)(1+f)B(X,Y)
= C(X, \nabla f, Y)+\frac {n-3}{n-2}\, { C(Y, \nabla f , X)} + (n-1)\delta T(X,Y). \label{eqn11}\ee

On the other hand, by taking the divergence of $T$, we have
\begin{eqnarray}
(n-1)(n-2) \delta T(X,Y)&=& \frac {n-2}{n-1}sfz(X,Y) - (n-2) D_{\nabla f}z(X,Y)+C(Y, \nabla f, X)\nonumber \\
& & +n(1+f)z\circ z(X,Y)-(1+f) |z|^2g(X,Y).\label{eqn15}
\end{eqnarray}

By combining (\ref{eqn11}) and (\ref{eqn15}), we obtain the followings.
\begin{proposition} \label{glem3} On $M$, we have
\bea(n-1)\langle \delta T, z\rangle&=&  (n-2)(1+f)\langle B, z\rangle \nonumber  \\
&=&   \frac {sf}{n-1} |z|^2  -\frac 12 \nabla f(|z|^2)+\frac n{n-2}(1+f)\langle z\circ z, z\rangle.
\eea
\end{proposition}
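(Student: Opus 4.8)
The plan is to derive both equalities by taking the pointwise inner product of the tensor identities (\ref{eqn11}) and (\ref{eqn15}) against the traceless Ricci tensor $z$, and then simplifying each contraction using the symmetries of the Cotton tensor $C$ and of $z$ together with the standing hypothesis $\langle i_XC,z\rangle=0$ (which holds throughout, being equivalent to $\delta B=0$ for $n\geq 5$ and automatic for $n=4$ by Proposition~\ref{lem02}).

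First I would establish the left-hand equality $(n-1)\langle\delta T,z\rangle=(n-2)(1+f)\langle B,z\rangle$. Contracting (\ref{eqn11}) with $z$ gives
\[
(n-2)(1+f)\langle B,z\rangle = \sum_{i,j}C(E_i,\nabla f,E_j)z_{ij} + \frac{n-3}{n-2}\sum_{i,j}C(E_j,\nabla f,E_i)z_{ij} + (n-1)\langle\delta T,z\rangle .
\]
The crucial observation is that both Cotton contractions collapse to a single multiple of $\langle i_{\nabla f}C,z\rangle$. Since $z$ is symmetric, relabeling $i\leftrightarrow j$ turns the second sum into the first; and since $C$ is antisymmetric in its first two arguments, $\sum_{i,j}C(E_i,\nabla f,E_j)z_{ij} = -\sum_{i,j}C(\nabla f,E_i,E_j)z_{ij} = -\langle i_{\nabla f}C,z\rangle$. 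By hypothesis (with $X=\nabla f$) this vanishes, so the two $C$-terms drop out and the first equality follows at once.

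Next I would treat the right-hand equality by contracting (\ref{eqn15}) with $z$ and evaluating the five resulting terms individually. Three of them simplify immediately: $\langle z,z\rangle=|z|^2$; the Cotton term $\sum_{i,j}C(E_j,\nabla f,E_i)z_{ij}=-\langle i_{\nabla f}C,z\rangle$ is again killed by the hypothesis; and because $z$ is traceless, the term $\langle g,z\rangle=\sum_i z(E_i,E_i)$ vanishes, removing the $-(1+f)|z|^2g$ contribution. For the covariant-derivative term I would use the Leibniz rule and the symmetry of $z$ to write $\langle D_{\nabla f}z,z\rangle=\tfrac12\nabla f(|z|^2)$, while the $z\circ z$ term is simply kept as $\langle z\circ z,z\rangle$. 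Assembling these yields
\[
(n-1)(n-2)\langle\delta T,z\rangle = \frac{n-2}{n-1}\,sf|z|^2 - \frac{n-2}{2}\,\nabla f(|z|^2) + n(1+f)\langle z\circ z,z\rangle ,
\]
and dividing through by $(n-2)$ produces the asserted formula.

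The computation is essentially index bookkeeping, so the step I would watch most carefully is the handling of the two Cotton-tensor contractions: one must correctly exploit the antisymmetry of $C$ in its first pair of slots and the symmetry of $z$ to recognize both as $-\langle i_{\nabla f}C,z\rangle$, and only then invoke the hypothesis to make them vanish. Everything else — the traceless property giving $\langle g,z\rangle=0$ and the identity $\langle D_{\nabla f}z,z\rangle=\tfrac12\nabla f(|z|^2)$ — is routine.
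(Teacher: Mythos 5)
Your proof is correct and follows exactly the paper's (unwritten) derivation: the paper justifies the proposition only with the phrase ``by combining (\ref{eqn11}) and (\ref{eqn15})'', and contracting each of those identities with $z$ --- using the antisymmetry of $C$ in its first two slots, the symmetry and tracelessness of $z$, and $\langle D_{\nabla f}z,z\rangle=\tfrac12\nabla f(|z|^2)$ --- is precisely that combination, with all coefficients checking out. One remark in your favor: your explicit appeal to the hypothesis $\langle i_XC,z\rangle=0$ is genuinely necessary, since without it the two equalities pick up Cotton correction terms (respectively $-\tfrac{2n-5}{n-2}\,\langle i_{\nabla f}C,z\rangle$ and $-\tfrac{1}{n-2}\,\langle i_{\nabla f}C,z\rangle$), which do not vanish for a general solution of the critical point equation; the paper states the proposition in Section 3, before the standing assumption is declared in Section 4, but your conditional version is the statement that is actually true and is the form in which the proposition is applied in Section 5.
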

Also we have
\begin{proposition} \label{glem5} On $M$, we have
$$(n-1)\delta \delta T (X)= \frac {n-1}{n-2}(1+f)\langle i_XC, z\rangle +(n-1) \langle i_XT, z\rangle.$$
\end{proposition}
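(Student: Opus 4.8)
The plan is to compute $\delta\delta T$ directly from the fact that $T$ is skew in its first two arguments, reducing the double divergence to a single algebraic contraction of $T$ against the curvature tensor, and then to recognize the Cotton tensor inside that contraction via (\ref{eqn09}).

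First I would fix a geodesic frame $\{E_i\}$ at a point and write, using the convention that $\delta$ contracts the first slot, $\delta\delta T(Z)=\sum_{i,j}(D^2_{E_j,E_i}T)(E_i,E_j,Z)$. Because $T(X,Y,Z)=-T(Y,X,Z)$, relabelling the summation indices $i\leftrightarrow j$ gives $\delta\delta T(Z)=-\sum_{i,j}(D^2_{E_i,E_j}T)(E_i,E_j,Z)$, so that $\delta\delta T(Z)=\tfrac12\sum_{i,j}\big((D^2_{E_j,E_i}-D^2_{E_i,E_j})T\big)(E_i,E_j,Z)$. Applying the first Ricci identity of Proposition \ref{lem01} turns the commutator into a curvature term, yielding
$$\delta\delta T(Z)=\frac12\sum_{i,j}\big(R(E_i,E_j)T\big)(E_i,E_j,Z).$$
This is exactly the mechanism already used for $\delta\delta C$ in the proof of Proposition \ref{lem02}, so I expect the subsequent bookkeeping to run in parallel.

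Next I would expand the derivation $R(E_i,E_j)$ acting on the three slots of $T$. The two terms hitting the first and second slots each produce a contraction $\sum r_{ab}T(E_a,E_b,Z)$, which vanishes since $r$ is symmetric while $T$ is skew in those slots; hence only the term acting on the last slot survives, giving $\delta\delta T(Z)=-\tfrac12\sum_{i,j,k}R(E_i,E_j,Z,E_k)T(E_i,E_j,E_k)$. Substituting $T=\tfrac1{n-2}\,df\wedge z+\tfrac1{(n-1)(n-2)}\,i_{\nabla f}z\wedge g$ and using the skew-symmetry of $R$ in its first pair, the $df\wedge z$ part collapses to a contraction of $\tilde i_{\nabla f}R$ against $z$. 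I would then insert the curvature decomposition together with the identity $\tilde i_{\nabla f}\mathcal W=(1+f)\,C+(n-1)\,T$ coming from (\ref{eqn09}); the $(1+f)C$ piece reproduces $\tfrac1{n-2}(1+f)\langle i_ZC,z\rangle$, which is precisely the first term sought.

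The main obstacle, as usual in this circle of computations, is the final accounting: the remaining contributions — the $(n-1)T$ piece of (\ref{eqn09}), the Ricci- and scalar-curvature pieces of the decomposition, and the entire $i_{\nabla f}z\wedge g$ part of $T$ — must be shown to combine into exactly $\langle i_ZT,z\rangle$ with coefficient one. Carrying this out will require the trace properties of $T$ (e.g. $\sum_iT(E_i,\cdot,E_i)$), the vanishing contractions of $C$ recorded in Proposition \ref{lem02} (such as $\sum_iC(E_i,\cdot,E_i)=0$), the symmetry of $z$ and $r$, and the constancy of $s$ that underlies (\ref{eqn09}). I expect no conceptual difficulty beyond keeping the numerous $g$-, $r$-, $z$- and $\nabla f$-contractions organized and matching coefficients; multiplying the resulting identity through by $n-1$ then yields the stated formula.
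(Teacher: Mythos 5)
Your proposal is correct, but it proves the identity by a genuinely different route than the paper. The paper never computes $\delta\delta T$ from scratch: it takes the divergence of the already-derived identity (\ref{eqn11}), which expresses $(n-2)(1+f)B$ through Cotton contractions and $(n-1)\delta T$, then eliminates the Bach terms using (\ref{eqn05}) together with the divergence formula of Proposition~\ref{lem02}, and finally converts the leftover term $\mathring{\mathcal W}z(\nabla f,\cdot)$ into $\langle i_XC,z\rangle$ and $\langle i_XT,z\rangle$ via (\ref{eqn09}). Your argument bypasses the Bach tensor entirely: the skew-symmetry of $T$ in its first two slots plus Proposition~\ref{lem01} reduces $\delta\delta T(Z)$ to $-\frac12\sum_{i,j,k}R(E_i,E_j,Z,E_k)T(E_i,E_j,E_k)$ (the same mechanism the paper applies to $\delta\delta C$ inside the proof of Proposition~\ref{lem02}), and your observation that the first- and second-slot curvature terms vanish as contractions of the symmetric Ricci tensor against the skew slots of $T$ is correct. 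Moreover, the bookkeeping you deferred does close, with the paper's curvature conventions: the $df\wedge z$ part of $T$ gives $\frac1{n-2}\langle i_Z\tilde{i}_{\nabla f}R,z\rangle$; writing $r=z+\frac sn g$, the curvature decomposition turns this into $\frac1{n-2}\langle i_Z\tilde{i}_{\nabla f}{\mathcal W},z\rangle-\frac1{(n-2)^2}df(Z)|z|^2+\frac2{(n-2)^2}\,z\circ z(\nabla f,Z)+\frac s{n(n-1)(n-2)}\,z(\nabla f,Z)$, while the $i_{\nabla f}z\wedge g$ part of $T$ contributes $-\frac1{(n-1)(n-2)}\left(z\circ z(\nabla f,Z)+\frac sn\, z(\nabla f,Z)\right)$; the $z(\nabla f,Z)$ terms cancel, and since $\langle i_ZT,z\rangle=\frac1{n-2}df(Z)|z|^2-\frac n{(n-1)(n-2)}\,z\circ z(\nabla f,Z)$, the remaining algebraic terms equal exactly $-\frac1{n-2}\langle i_ZT,z\rangle$; combining with $\frac1{n-2}\langle i_Z\tilde{i}_{\nabla f}{\mathcal W},z\rangle=\frac{1+f}{n-2}\langle i_ZC,z\rangle+\frac{n-1}{n-2}\langle i_ZT,z\rangle$ from (\ref{eqn09}) yields $\delta\delta T(Z)=\frac{1+f}{n-2}\langle i_ZC,z\rangle+\langle i_ZT,z\rangle$, which is the proposition after multiplying by $n-1$. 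As for what each approach buys: the paper's is shorter given its prior development (it recycles (\ref{eqn11}), which is needed anyway en route to Proposition~\ref{glem3}) and makes transparent the relation between $\delta\delta T$ and $\delta B$; yours is more self-contained, using only Proposition~\ref{lem01}, the curvature decomposition, and (\ref{eqn09}), and it makes clear that the identity has nothing intrinsically to do with the Bach tensor.
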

\begin{proof}
By taking the derivative of (\ref{eqn11}), we have
\bea
(n-2)B(\nabla f, X)&=& (n-2)(1+f) \delta B(X)-\delta C(\nabla f, X)\\
& & +\frac {n-3}{n-2}(1+f) \langle i_XC, z\rangle -(n-1)\delta \delta T (X).
\eea 
Thus, by (\ref{eqn05}) and Proposition~\ref{lem02} we have
\bea (n-1)\delta \delta T (X)&=& -\mathring{\mathcal W}z(\nabla f, X) + \frac 1{n-2}(1+f)\langle i_X C,z\rangle\\
&=& \frac {n-1}{n-2}(1+f)\langle i_XC, z\rangle +(n-1) \langle i_XT, z\rangle.
\eea
where the last equality comes from (\ref{eqn09}).
\end{proof}
We also have the following.
\begin{lemma} \label{glem4} We have
$$|T|^2=\frac 2{n-2} \langle i_{\nabla f}T, z\rangle,$$
and
$$ \frac {(n-2)^2}2 |T|^2= |z|^2|\nabla f|^2-\frac n{n-1}z\circ z(\nabla f, \nabla f).$$
\end{lemma}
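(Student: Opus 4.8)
The plan is to prove both identities by a single direct computation in a local orthonormal frame $\{E_i\}$, routing every contraction through the two elementary facts that recur throughout: the trace-free condition $\sum_i z(E_i,E_i)=0$ and the identity $\sum_k df(E_k)\,z(E_i,E_k)=z(\nabla f,E_i)$, the latter also giving $z\circ z(\nabla f,\nabla f)=\sum_i z(\nabla f,E_i)^2=|i_{\nabla f}z|^2$. First I would split
\[
T=\tfrac1{n-2}\,P+\tfrac1{(n-1)(n-2)}\,Q,\qquad P=df\wedge z,\quad Q=i_{\nabla f}z\wedge g,
\]
so that $|T|^2=\tfrac1{(n-2)^2}|P|^2+\tfrac2{(n-1)(n-2)^2}\langle P,Q\rangle+\tfrac1{(n-1)^2(n-2)^2}|Q|^2$, and then compute the three scalars $|P|^2$, $\langle P,Q\rangle$, $|Q|^2$ separately.

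Writing $f_i=df(E_i)$, $z_{ij}=z(E_i,E_j)$, $w_i=z(\nabla f,E_i)$, and recording that in an orthonormal frame $g(E_j,E_k)=\delta_{jk}$, the components are $P(E_i,E_j,E_k)=f_iz_{jk}-f_jz_{ik}$ and $Q(E_i,E_j,E_k)=w_i\delta_{jk}-w_j\delta_{ik}$. Squaring $P$, the two ``diagonal'' terms each give $|\nabla f|^2|z|^2$ and the cross term is recognized as $z\circ z(\nabla f,\nabla f)$, so $|P|^2=2|\nabla f|^2|z|^2-2\,z\circ z(\nabla f,\nabla f)$. Squaring $Q$, the factor $n$ enters through $\sum_{j,k}\delta_{jk}=n$ and is cut down by the cross term, giving $|Q|^2=2(n-1)\,z\circ z(\nabla f,\nabla f)$. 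In the mixed term $\langle P,Q\rangle$ the two pieces pairing a Kronecker delta against $z_{jk}$ or $z_{ik}$ vanish by trace-freeness, while the remaining two each collapse, via the contraction $\sum_k f_kz_{jk}=w_j$, to $-z\circ z(\nabla f,\nabla f)$, whence $\langle P,Q\rangle=-2\,z\circ z(\nabla f,\nabla f)$.

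Substituting these and collecting the coefficient of $z\circ z(\nabla f,\nabla f)$ as $-\tfrac{2}{(n-2)^2}\bigl(1+\tfrac1{n-1}\bigr)=-\tfrac{2}{(n-2)^2}\cdot\tfrac{n}{n-1}$ yields exactly the second identity $\tfrac{(n-2)^2}{2}|T|^2=|z|^2|\nabla f|^2-\tfrac{n}{n-1}z\circ z(\nabla f,\nabla f)$. For the first identity I would compute $\langle i_{\nabla f}T,z\rangle$ by the same contractions: evaluating $i_{\nabla f}T(Y,Z)=T(\nabla f,Y,Z)$ and pairing with $z$, the term carrying $g(Y,Z)$ drops by trace-freeness and the rest reduce through $\sum_j f_jz_{jk}=w_k$ to give $\langle i_{\nabla f}T,z\rangle=\tfrac1{n-2}\bigl(|\nabla f|^2|z|^2-\tfrac{n}{n-1}z\circ z(\nabla f,\nabla f)\bigr)$; comparing this with the value just found for $\tfrac{(n-2)^2}{2}|T|^2$ shows $\langle i_{\nabla f}T,z\rangle=\tfrac{n-2}{2}|T|^2$, i.e. $|T|^2=\tfrac{2}{n-2}\langle i_{\nabla f}T,z\rangle$. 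The only genuine difficulty here is bookkeeping: respecting the antisymmetry of the wedge in its first two slots, correctly matching indices in the cross terms of $\langle P,Q\rangle$ and of $\langle i_{\nabla f}T,z\rangle$, and ensuring each sum is funneled through one of the two basic contraction identities. Once that organization is in place, every step is a reindexing and no estimate is needed.
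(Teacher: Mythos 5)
Your proposal is correct, and it is essentially the same argument as the paper's: both are direct orthonormal-frame computations that expand $T$ into its two wedge constituents and funnel all contractions through trace-freeness of $z$ and the identity $\sum_k f_k z_{jk}=z(\nabla f,E_j)$, yielding exactly the stated coefficient $-\tfrac{n}{n-1}$ on $z\circ z(\nabla f,\nabla f)$. The only (cosmetic) difference is that you obtain the first identity by computing $\langle i_{\nabla f}T,z\rangle$ explicitly and comparing it with $|T|^2$, whereas the paper gets it by contracting one copy of $T$ against the definition of the other; both routes are equally valid bookkeeping.
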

\begin{proof}
It is a straightforward computation. From the definition of $T$,
$$|T|^2=\frac 1{n-2}\sum_{i,j,k}T(E_i, E_j, E_k)\left(df\wedge z+\frac 1{n-1}i_{\nabla f}z\wedge g(E_i, E_j, E_k)\right)=\frac 2{n-2} \langle i_{\nabla f}T, z\rangle.$$
Also
\bea 
(n-2)^2|T|^2&=&\sum_{i,j,k} |df(E_i)z_{jk}-df(E_j)z_{ik}+\frac 1{n-1}\,\left( z(\nabla f, E_i)g_{jk}-z(\nabla f, E_j)g_{ik}\right)|^2\\
&=&2 |\nabla f|^2|z|^2 -\frac {2n}{n-1}\, z\circ z(\nabla f, \nabla f).
\eea
\end{proof}

\section{Proof of  Theorem~\ref{main1}}
In this section, we prove Theorem~\ref{main1}. Throughout the section and the next section, we assume that $\langle i_X C, z\rangle =0$ for any vector $X$ with  $n \geq 4$.  
To prove Theorem~\ref{main1}, we first need the following.
\begin{lemma} \label{glem1} Let $(g,f)$ be a non-trivial solution of (\ref{cpe0})  on an $n$-dimensional compact manifold $M$, $n\geq 4$. Assume that $\langle i_XC,z\rangle =0$. Then 
$$ \int_M (1+f)\langle z\circ z , z\rangle =\frac {(n-2)s}{2n(n-1)} \int_M |z|^2.
$$
\end{lemma}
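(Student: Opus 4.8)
The plan is to feed the hypothesis into the computation only through identity (\ref{eqn07}), namely $\frac12\tr|z|^2=\delta\delta(z\circ z)$, and then to pair the double divergence $\delta\delta(z\circ z)$ against the weight $1+f$ and integrate over $M$. The reason $1+f$ is the right weight is that it is exactly the factor appearing in the Hessian of $f$: rewriting (\ref{cpe1}) gives $Ddf=(1+f)z-\frac{s}{n(n-1)}fg$, so pairing with $1+f$ will produce the cubic quantity $\langle z\circ z,z\rangle$ together with an $|z|^2$ term, which is precisely the shape of the asserted identity.

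Concretely, I would evaluate $\int_M(1+f)\,\delta\delta(z\circ z)$ in two ways. First, integrating by parts twice moves both divergences onto the weight, via the adjoint relation $\int_M\varphi\,\delta\delta h=\int_M\langle Dd\varphi,h\rangle$ for a function $\varphi$ and a symmetric $2$-tensor $h$; with $\varphi=1+f$ and $h=z\circ z$ this yields $\int_M(1+f)\,\delta\delta(z\circ z)=\int_M\langle Ddf,z\circ z\rangle$. Substituting $Ddf=(1+f)z-\frac{s}{n(n-1)}fg$ and using that $z$ is trace-free, so that $\langle g,z\circ z\rangle=|z|^2$ and $\langle z,z\circ z\rangle=\langle z\circ z,z\rangle$, gives
\[
\int_M(1+f)\,\delta\delta(z\circ z)=\int_M(1+f)\langle z\circ z,z\rangle-\frac{s}{n(n-1)}\int_M f|z|^2.
\]
Second, replacing $\delta\delta(z\circ z)$ by $\tfrac12\tr|z|^2$ via (\ref{eqn07}) and integrating by parts once,
\[
\int_M(1+f)\,\delta\delta(z\circ z)=\tfrac12\int_M(1+f)\tr|z|^2=-\tfrac12\int_M\langle\nabla f,\nabla|z|^2\rangle=\tfrac12\int_M(\tr f)|z|^2=-\frac{s}{2(n-1)}\int_M f|z|^2,
\]
where the last equality uses the trace relation $\tr f=-\frac{s}{n-1}f$. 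Equating the two evaluations and solving then produces $\int_M(1+f)\langle z\circ z,z\rangle=-\frac{(n-2)s}{2n(n-1)}\int_M f|z|^2$.

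To finish I would convert $\int_M f|z|^2$ into $\int_M|z|^2$ using the global identity already recorded in the introduction: since ${\rm div}(i_{\nabla f}z)=(1+f)|z|^2$, the divergence theorem gives $\int_M(1+f)|z|^2=0$, that is $\int_M f|z|^2=-\int_M|z|^2$. Substituting this into the previous display yields exactly $\int_M(1+f)\langle z\circ z,z\rangle=\frac{(n-2)s}{2n(n-1)}\int_M|z|^2$, as claimed.

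The work here is bookkeeping rather than ideas, so the main obstacle is keeping the two integrations by parts consistent under the paper's convention that $\tr$ is the negative Laplacian, and correctly applying the double-divergence adjoint formula $\int_M\varphi\,\delta\delta h=\int_M\langle Dd\varphi,h\rangle$. The one genuinely essential observation is that the hypothesis $\langle i_XC,z\rangle=0$ enters solely through (\ref{eqn07}), and that the clean final form (with $\int_M|z|^2$ rather than $\int_M f|z|^2$) only emerges after invoking $\int_M(1+f)|z|^2=0$; without that last divergence identity one is stuck with the weaker statement phrased in terms of $\int_M f|z|^2$.
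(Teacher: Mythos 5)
Your proposal is correct and follows essentially the same route as the paper: both evaluate $\int_M(1+f)\,\delta\delta(z\circ z)=\tfrac12\int_M(1+f)\tr|z|^2$ in two ways (once via the trace relation $\tr f=-\tfrac{s}{n-1}f$, once by moving the divergences onto $1+f$ to get $\langle Ddf,z\circ z\rangle$ and substituting (\ref{cpe1})), and both then convert $\int_M f|z|^2$ to $-\int_M|z|^2$ using $\delta(i_{\nabla f}z)=-(1+f)|z|^2$. No gaps; the bookkeeping and signs are consistent with the paper's conventions.
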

\begin{proof} Note that
$$\frac 12 \int_M (1+f)\tr |z|^2=\frac 12 \int_M |z|^2\tr f = -\frac s{2(n-1)}\int_M f|z|^2.
$$
Also, by  (\ref{eqn07}) we have
\bea
\frac{1}{2}\int_M (1+f)\tr |z|^2 &=&
\int_M (1+f) \delta \delta(z\circ z) =\int_M \delta (z\circ z)(\nabla f)=  \int_M \langle z\circ z, Ddf\rangle\\
&=&
\int_M (1+f)\langle z\circ z, z\rangle - \frac{s}{n(n-1)}\int_M  f|z|^2.
\eea
Thus,
$$
\int_M (1+f)\langle z\circ z, z\rangle  = - \frac{(n-2)s}{2n(n-1)}\int_M f |z|^2.
$$
However, by (\ref{cpe0}) it is easy to see that 
$$\delta (i_{\nabla f}z)=-(1+f)|z|^2,$$
which implies that
\be \int_M f|z|^2=-\int_M |z|^2.\label{geq2}\ee
\end{proof}

The following is the Okumura inequality which can be found in Lemma 2.6 of \cite{Ok} (c.f. see also Lemma 2.4 of \cite{Hui}).
\begin{lemma} \label{lem2017-12-24-2}
For any real numbers $a_1, \cdots, a_n$ with $\displaystyle{\sum_{i=1}^n a_i = 0}$, 
we have
$$
- \frac{n-2}{\sqrt{n(n-1)}} \left(\sum_{i=1}^n a_i^2\right)^{3/2} \le \sum_{i=1}^n a_i^3
\le \frac{n-2}{\sqrt{n(n-1)}} \left(\sum_{i=1}^n a_i^2\right)^{3/2},
$$
and equality holds if and only if at least $n-1$ of the $a_i$'s are all equal.
\end{lemma}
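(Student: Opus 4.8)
The plan is to treat the two-sided inequality as a constrained optimization problem and to solve it by Lagrange multipliers, the key observation being that the critical-point condition forces the $a_i$ to take at most two distinct values. First I would exploit homogeneity: both $\sum_i a_i^3$ and $\left(\sum_i a_i^2\right)^{3/2}$ are degree-$3$ homogeneous in $(a_1,\dots,a_n)$, and replacing each $a_i$ by $-a_i$ flips the sign of $\sum_i a_i^3$ while fixing $\left(\sum_i a_i^2\right)^{3/2}$, so the symmetric estimate is invariant under scaling. If $\sum_i a_i^2=0$ then all $a_i$ vanish and both sides are $0$; otherwise I normalize $\sum_i a_i^2=1$, reducing everything to the claim
\[
\left|\sum_{i=1}^n a_i^3\right|\le \frac{n-2}{\sqrt{n(n-1)}}
\]
subject to $\sum_i a_i=0$ and $\sum_i a_i^2=1$. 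The constraint set $K=\{a:\sum_i a_i=0,\ \sum_i a_i^2=1\}$ is the intersection of a hyperplane through the origin with the unit sphere, hence a compact $(n-2)$-sphere, so $\phi(a)=\sum_i a_i^3$ attains its maximum and minimum on $K$.

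Next I would apply Lagrange multipliers. Since $K$ is a smooth closed manifold, every extremum of $\phi$ is a constrained critical point, so there exist $\lambda,\mu$ with $3a_i^2=2\mu a_i+\lambda$ for every $i$. Thus each $a_i$ is a root of the \emph{same} quadratic $3t^2-2\mu t-\lambda=0$ and therefore takes at most two distinct values. The $a_i$ cannot all coincide, since $a_i\equiv p$ would force $np=0$ and contradict $\sum_i a_i^2=1$; hence at an extremum they split into two groups, $k$ of them equal to $p$ and $n-k$ equal to $q\ne p$, with $1\le k\le n-1$.

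Finally I would carry out the resulting discrete optimization. Solving $kp+(n-k)q=0$ and $kp^2+(n-k)q^2=1$ gives $q=-\tfrac{k}{n-k}\,p$ and $p^2=\tfrac{n-k}{kn}$, and substituting into $\phi$ collapses it to
\[
\sum_{i=1}^n a_i^3=\pm\,\frac{n-2k}{\sqrt{kn(n-k)}}.
\]
It then remains to maximize $h(k)=\dfrac{(n-2k)^2}{nk(n-k)}$ over the integers $1\le k\le n-1$, and this is the only real step, where I expect the slight technical work to lie (in particular one must use the integer restriction, since $h$ blows up as $k\to 0^+$ or $k\to n^-$). The obstacle dissolves thanks to the clean identity $(n-2k)^2+4k(n-k)=n^2$: differentiating the reciprocal shows that $\tfrac{d}{dk}\big(k(n-k)/(n-2k)^2\big)$ has the sign of $n-2k$, so $h$ is strictly decreasing on $(0,n/2]$ and, by the symmetry $h(k)=h(n-k)$, strictly increasing on $[n/2,n)$. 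Hence over the integers $h$ is largest exactly at $k=1$ and $k=n-1$, where $h(1)=h(n-1)=\tfrac{(n-2)^2}{n(n-1)}$, which yields the bound $\tfrac{n-2}{\sqrt{n(n-1)}}$. The extremal configurations $k=1$ and $k=n-1$ are precisely those in which $n-1$ of the $a_i$ coincide, so together with the trivial all-zero case this gives the stated equality characterization.
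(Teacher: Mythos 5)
Your proof is correct, but it is worth pointing out that the paper itself contains no proof of this lemma: it is quoted verbatim as Okumura's inequality, with the reader referred to Lemma 2.6 of \cite{Ok} (cf.\ Lemma 2.4 of \cite{Hui}). So your argument is not a rederivation of the paper's proof but a self-contained replacement for the citation. Your route --- normalize by homogeneity to $\sum_i a_i^2=1$, apply Lagrange multipliers on the compact constraint sphere (the multiplier rule is legitimate here since $(1,\dots,1)$ and $(a_1,\dots,a_n)$ are linearly independent at every point of $K$, so $K$ is a regular level set), conclude that extremal configurations take at most two values, and then optimize $h(k)=\frac{(n-2k)^2}{nk(n-k)}$ over integers $1\le k\le n-1$ --- is sound, and the two computational pivots check out: the identity $(n-2k)^2+4k(n-k)=n^2$ does give $\frac{d}{dk}\bigl(k(n-k)/(n-2k)^2\bigr)=\frac{n^2}{(n-2k)^3}$, whence $h$ decreases on $(0,n/2]$ and increases on $[n/2,n)$, so its integer maximum is at $k=1$ and $k=n-1$ with value $\frac{(n-2)^2}{n(n-1)}$. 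What your approach buys over the bare citation is transparency and, in particular, the equality case for free: extremals must be two-valued with $k\in\{1,n-1\}$, i.e.\ at least $n-1$ of the $a_i$ coincide, and conversely such configurations achieve the bound (on the appropriate side, depending on the sign of the repeated value); the classical proofs in \cite{Ok} and \cite{Hui} reach the same bound by direct algebraic estimation, which is shorter on the page but leaves the structure of the equality case less visible. The one stylistic gap is that your final sentence asserts rather than verifies the converse implication (that every configuration with $n-1$ equal entries actually gives equality); this is a two-line substitution, consistent with your formula $\pm(n-2k)/\sqrt{kn(n-k)}$ at $k=1$, and does not affect correctness.
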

\vskip .5pc

Now, we are ready to prove Theorem~\ref{main1}. 

Let $M_0:= \{ f\le -1\}$ and $M^0:= \{f >-1\}$. Note that $\langle z\circ z, z\rangle=\mbox{tr}(z^3)$. By applying Lemma~\ref{lem2017-12-24-2} to the traceless Ricci tensor $z$, we have
$$
(1+f)\langle z\circ z, z\rangle \le - \frac{n-2}{\sqrt{n(n-1)}} (1+f)|z|^3
$$
on the set $M_0$, and
$$
(1+f)\langle z\circ z, z\rangle \le  \frac{n-2}{\sqrt{n(n-1)}} (1+f)|z|^3
$$
on the set $M^0$. 

Therefore, by Lemma~\ref{glem1} with $M=M_0\cup  M^0$,
\bea
\frac{(n-2)s}{2n(n-1)}\int_M  |z|^2 
&=&  \int_{M_0} (1+f)\langle z\circ z, z\rangle  + \int_{M^0} (1+f)\langle z\circ z, z\rangle\\
&\le&
- \frac{n-2}{\sqrt{n(n-1)}} \int_{M_0} (1+f)|z|^3
+  \frac{n-2}{\sqrt{n(n-1)}} \int_{M^0} (1+f)|z|^3\\
&\le&
 \frac{n-2}{\sqrt{n(n-1)}} |\min_M(1+f)| \int_{M_0}|z|^3
 +  \frac{n-2}{\sqrt{n(n-1)}} \, \max_M (1+f) \int_{M^0} |z|^3\\
  &\le&
   \frac{(n-2)\mu }{\sqrt{n(n-1)}} \int_M|z|^3.
\eea
Recall that $
\mu= \max\{|\min_M(1+f)|, \max_M (1+f)\,\}
$.  

Consequently, we obtain
$$
\frac{n-2}{\sqrt{n(n-1)}} \int_M \left(\frac s{2\sqrt{n(n-1)}} - \mu\, |z|\right) |z|^2\le 0.\label{eqn2017-12-26-5}
$$
From the assumption, $$\frac s{2\sqrt{n(n-1)}}- {\mu}|z| \ge 0 .$$ As a result, we have either
$
z= 0$, or $$|z| = \frac{s}{2\mu\sqrt{n(n-1)}}.
$$
From (\ref{geq2}) and the fact that $\int_M f= 0$, the second case should be excluded; otherwise 
$$0=\int_M (1+f)|z|^2= \frac {s^2}{4n(n-1)\mu^2}\int_M (1+f)=\frac {s^2}{4n(n-1)\mu^2},
$$
which is a contradiction.
\hfill $\Box$

\section{Proof of  Theorem~\ref{main2}}
In this section, we prove Theorem~\ref{main2}.  To do this, we first show the following integral identity.

\begin{lemma}
We have
\be
\frac {(n-1)(n-2)}2\int_M |T|^2
&=& \frac {s}{n}\int_M f^2|z|^2 + \int_M  z\circ z(\nabla f, \nabla f) \label{eqn2018-1-22-2}\\
&+& \frac {2(n-1)}{n-2}\int_M f(1+f)\langle z\circ z, z\rangle. \nonumber
\ee
\end{lemma}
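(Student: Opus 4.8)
The plan is to begin from the first identity in Lemma~\ref{glem4}, $|T|^2=\frac{2}{n-2}\langle i_{\nabla f}T,z\rangle$, and integrate it over $M$:
\[
\frac{(n-1)(n-2)}{2}\int_M |T|^2=(n-1)\int_M \langle i_{\nabla f}T,z\rangle .
\]
Under the standing assumption $\langle i_XC,z\rangle=0$, Proposition~\ref{glem5} collapses to $\delta\delta T(X)=\langle i_XT,z\rangle$, so the integrand on the right is exactly $\delta\delta T(\nabla f)$. Thus the entire problem reduces to evaluating $\int_M \delta\delta T(\nabla f)$ in terms of curvature quantities.

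Next I would integrate by parts. Writing $\delta\delta T(\nabla f)=\langle\delta(\delta T),df\rangle$ and using the $L^2$-adjointness of $\delta$ on $2$-tensors gives $\int_M\langle\delta(\delta T),df\rangle=\int_M\langle\delta T,Ddf\rangle$, since $D(df)=Ddf$ is the Hessian. Substituting the critical point equation in the form (\ref{cpe1}), $Ddf=(1+f)z-\frac{sf}{n(n-1)}g$, and observing that $\delta T$ is trace-free (which follows from $T$ being trace-free in its last two slots, a short computation from the definition of $T$), the $g$-term drops out and I am left with
\[
(n-1)\int_M \delta\delta T(\nabla f)=(n-1)\int_M (1+f)\langle\delta T,z\rangle .
\]
Into this I insert Proposition~\ref{glem3}, which rewrites $(n-1)\langle\delta T,z\rangle$ as $\frac{sf}{n-1}|z|^2-\frac12\nabla f(|z|^2)+\frac{n}{n-2}(1+f)\langle z\circ z,z\rangle$, integrated against the weight $(1+f)$.

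The final step is to simplify the three resulting integrals. For the middle term I would integrate $-\frac12\int_M(1+f)\nabla f(|z|^2)$ by parts, using $\mathrm{div}((1+f)\nabla f)=|\nabla f|^2+(1+f)\tr f$ together with $\tr f=-\frac{s}{n-1}f$; the emerging $\int_M|z|^2|\nabla f|^2$ is then rewritten through the second identity of Lemma~\ref{glem4}, which reintroduces a multiple of $\int_M|T|^2$ (to be transposed back to the left) and a multiple of $\int_M z\circ z(\nabla f,\nabla f)$. For the cubic term I expand $(1+f)^2=(1+f)+f(1+f)$ and apply Lemma~\ref{glem1} to evaluate $\int_M(1+f)\langle z\circ z,z\rangle=\frac{(n-2)s}{2n(n-1)}\int_M|z|^2$. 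Finally the scalar-curvature pieces are combined via (\ref{geq2}), i.e. $\int_M f|z|^2=-\int_M|z|^2$, collapsing $\frac{s}{2(n-1)}\int_M f(1+f)|z|^2+\frac{s}{2(n-1)}\int_M|z|^2$ into $\frac{s}{2(n-1)}\int_M f^2|z|^2$; multiplying through by $\frac{2(n-1)}{n}$ then yields the stated identity.

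I expect the main obstacle to be organizational rather than conceptual: keeping the coefficient arithmetic straight when $\int_M|T|^2$ reappears on the right (through $\int_M|z|^2|\nabla f|^2$) and has to be moved to the left, so that the net coefficient $\frac{(n-1)(n-2)}{2}-\frac{(n-2)^2}{4}=\frac{n(n-2)}{4}$ is computed correctly, and making sure the combination of Lemma~\ref{glem1} and (\ref{geq2}) is applied so that all bare $\int_M|z|^2$ terms cancel and only $\frac{s}{n}\int_M f^2|z|^2$ survives. The lone genuine (if brief) computation that cannot be avoided is verifying $\mathrm{tr}(\delta T)=0$ from the definition of $T$, which is what kills the $g$-contribution after substituting (\ref{cpe1}).
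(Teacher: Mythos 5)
Your argument is correct --- I checked the integrations by parts and the coefficient bookkeeping, and it does land exactly on (\ref{eqn2018-1-22-2}) --- but after the opening reduction it follows a genuinely different route from the paper's. The shared skeleton: both proofs use Lemma~\ref{glem4} and Proposition~\ref{glem5} (under the standing hypothesis $\langle i_XC,z\rangle=0$) to identify $\frac{n-2}{2}\int_M|T|^2$ with $\int_M\delta\delta T(\nabla f)$, and both ultimately feed this into Proposition~\ref{glem3}. The divergence point is how that integral is processed. The paper writes $\int_M\delta\delta T(\nabla f)=\int_M f\,\delta\delta\delta T$ and computes the triple divergence pointwise, $\delta\delta\delta T=\langle\delta T,z\rangle$ (which requires the extra observation $\langle T,C\rangle=0$), so Proposition~\ref{glem3} enters with weight $f$; the leftover term $\frac12\int_M f\nabla f(|z|^2)$ is then converted via the identity $\frac12 d|z|^2=-\delta(z\circ z)$ from Proposition~\ref{lem02} together with (\ref{cpe1}). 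You instead move the outer $\delta$ onto $df$ and substitute (\ref{cpe1}) immediately, using $\mathrm{tr}(\delta T)=0$ --- a correct claim: $T$ is trace-free in its last two slots, and one can also read it off from (\ref{eqn15}) --- so Proposition~\ref{glem3} enters with weight $1+f$; the two weightings differ by $(n-1)\int_M\delta\delta\delta T=0$, which is why both are consistent. Your leftover term $-\frac12\int_M(1+f)\nabla f(|z|^2)$ is then handled by elementary integration by parts, at the price of reintroducing $\int_M|z|^2|\nabla f|^2$, hence needing the second identity of Lemma~\ref{glem4} (with the $\int_M|T|^2$ term transposed to the left), plus Lemma~\ref{glem1} and (\ref{geq2}) to cancel the bare $\int_M|z|^2$ contributions. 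What each approach buys: the paper's version is leaner at this stage (it never invokes Lemma~\ref{glem1}, (\ref{geq2}), or trace-freeness of $\delta T$), whereas yours avoids both the pointwise computation of $\delta\delta\delta T$ and the Cotton identity of Proposition~\ref{lem02}, trading those structural identities for more elementary, if heavier, bookkeeping; it also shows that Lemma~\ref{glem1}, which the paper only deploys later in the proof of Theorem~\ref{main2}, can already do useful work here.
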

\begin{proof}
It follows from Proposition~\ref{glem5} and Lemma~\ref{glem4} together with the assumption 
 $\langle i_XC, z\rangle =0$ for any vector $X$ that
$$ 
\delta \delta T(\nabla f)=\langle i_{\nabla f}T, z\rangle = \frac {n-2}2\, |T|^2.
$$
Thus,
\be
\int_M f\delta \delta \delta T =\int_M \delta \delta T(\nabla f)=\frac {n-2}2 \int_M |T|^2.\label{seq1-1}
\ee
By Proposition~\ref{glem5} again with $\langle i_XC, z\rangle =0$, we have
\be \delta \delta T (X)=\langle i_XT, z\rangle,\label{1122e}\ee
From the definition of $T$,
$$
\langle T, C\rangle =\frac 1{n-2}\langle df\wedge z, C\rangle = \frac 2{n-2}\langle i_{\nabla f}C, z\rangle =0,
$$
and so, by taking the divergence of $\delta \delta T$,  it follows from (\ref{1122e}) that
$$
\delta \delta \delta T =\langle \delta T, z\rangle -\frac 12 \langle T, C\rangle= \langle \delta T, z\rangle.
$$
Thus, by Proposition~\ref{glem3}
\bea
(n-1)\delta \delta \delta T =\frac {sf}{n-1}|z|^2-\frac 12 \nabla f(|z|^2)
+ \frac n{n-2}(1+f)\langle z\circ z , z\rangle.\label{seq2-1}
\eea
From this together with (\ref{seq1-1}), we have
\be
\frac {(n-1)(n-2)}2\int_M |T|^2
&=& \frac {s}{n-1}\int_M f^2|z|^2-\frac 12 \int_M f\nabla f(|z|^2) \label{eqn2018-1-23-1}\\
&&+\frac n{n-2}\int_M f(1+f)\langle z\circ z, z\rangle. \nonumber
\ee
Next,  by Proposition~\ref{lem02} with the assumption that $\langle i_XC,z\rangle =0$,  we have
\bea
\frac 12 \int_M f\nabla f(|z|^2)&=& -\int_M \delta (z\circ z)(f\nabla f)\\
&=& -\int_M  z\circ z (\nabla f, \nabla f) -\int_M f\langle z\circ z, Ddf\rangle\\
&=&-\int_M z\circ z (\nabla f, \nabla f) -\int_M f(1+f)\langle z\circ z, z\rangle +\frac s{n(n-1)}\int_M f^2|z|^2.
\eea
Here, the last equality comes from (\ref{cpe1}). Substituting this into (\ref{eqn2018-1-23-1}), 
we obtain (\ref{eqn2018-1-22-2}).
\end{proof}

Now, we are ready to prove Theorem~\ref{main2}.
\begin{proof}
By Lemma~\ref{glem4} 
\bea 
\frac {(n-1)(n-2)}2\int_M |T|^2&=& \frac {n-1}{n-2}\int_M |z|^2|\nabla f|^2 
 - \frac n{n-2}\int_M z\circ z(\nabla f, \nabla f).
\eea
Comparing this to (\ref{eqn2018-1-22-2}), we have
\bea
\lefteqn{ \frac {n-1}{n-2}\int_M |z|^2|\nabla f|^2 -\frac {2(n-1)}{n-2} \int_M z\circ z (\nabla f, \nabla f)}\\
&&\quad
=\frac sn \int_M  f^2|z|^2 +\frac {2(n-1)}{n-2}\int_M f(1+f)\langle z\circ z, z\rangle.
\eea
From the assumption 
$$ |z|^2\leq \min \left\{ 2|i_Nz|^2 , \frac {s^2}{4n(n-1)} \right\},  $$
we have 
$$|\nabla f|^2|z|^2 \leq 2|i_{\nabla f}z|^2=2z\circ z(\nabla f, \nabla f),$$
and so
$$\frac sn \int_M  f^2|z|^2 +\frac {2(n-1)}{n-2}\int_M f(1+f)\langle z\circ z, z\rangle\leq 0.$$

Note that, by Lemma~\ref{glem1}
\bea 
\int_M f(1+f)\langle z\circ z, z\rangle &=&\int_M f^2 \langle z\circ z, z\rangle +\int_M f\langle z\circ z, z\rangle\\
&=&\int_M f^2 \langle z\circ z, z\rangle +\frac {(n-2)s}{2n(n-1)}\int_M |z|^2 -\int_M \langle z\circ z,z\rangle.
\eea
Therefore, applying Lemma~\ref{lem2017-12-24-2}
$$
\frac sn\int_M (1+f^2)|z|^2\leq \frac {2(n-1)}{n-2}\int_M (1-f^2)\langle z\circ z, z\rangle\leq  \frac {2\sqrt{n-1}}{\sqrt{n}}\int_M (1+f^2) |z|^3,
$$
which implies 
$$ 
0\leq  \int_M (1+f^2)|z|^2\left( \frac s{2\sqrt{n(n-1)}} -|z|\right) \leq 0,
$$
where the first inequality follows from the assumption  
$$ 
|z|\leq \frac s{2\sqrt{n(n-1)}}.
$$
Hence, we may conclude that $z=0$ on all of $M$. If the equality holds,
$$ |z|= \frac s{2\sqrt{n(n-1)}}$$
then we reach a contradiction as in the proof of Theorem~\ref{main1}.
\end{proof}

\section*{References}

\end{document}